\newlength{\standardunitlength}
\newtheorem{prop}{Proposition}[section]
\newtheorem{lemma}[prop]{Lemma}
\newtheorem{theorem}[prop]{Theorem}
\begin{document}

\title [Regular semisimple conjugacy classes] {The number of regular semisimple conjugacy classes in
the finite classical groups}

\author{Jason Fulman}
\address{Department of Mathematics\\
        University of Southern California\\
        Los Angeles, CA, 90089}
\email{fulman@usc.edu}

\author{Robert Guralnick}
\address{Department of Mathematics\\University of Southern California \\ Los Angeles, CA, 90089}
\email{guralnic@math.usc.edu}

\keywords{classical group, conjugacy class, regular semisimple, generating function}

\date{Version of April 26, 2012}

\begin{abstract} Using generating functions, we enumerate regular semisimple conjugacy classes in the finite classical groups.
For the general linear, unitary, and symplectic groups this gives a different approach to known results; for the special
orthogonal groups the results are new.
\end{abstract}

\maketitle

\section{Introduction}

An element of a finite classical group $G$ is said to be {\it semisimple} if its minimal polynomial has no repeated roots in the algebraic closure of $\mathbb{F}_q$. This is equivalent to being a $p'$ element, where $p$ is the characteristic of the field. An element of $G$ is said to be {\it regular} if its centralizer in the corresponding group over the algebraic closure of $\mathbb{F}_q$ has dimension equal to the Lie rank of the group. For the general linear, unitary, and symplectic groups, a matrix is regular if and only if its characteristic polynomial is equal to its minimal polynomial; this is not true for orthogonal groups.

The set of regular semisimple elements of a finite group of Lie type plays an important role in representation theory \cite{C}. There are many papers in the literature which have studied the enumeration of such elements. The papers \cite{FJ} and \cite{L1} give expressions enumerating regular semisimple elements; however the formulae are very complicated and it seems hard to use them to obtain simple upper and lower bounds. Guralnick and L\"{u}beck \cite{GL} show that for simple $G$, the proportion of regular semisimple elements is at least $1-3/(q-1)-2/(q-1)^2$. Neumann and Praeger \cite{NP} show that the proportion of regular semisimple matrices (not necessarily invertible) is at least $1-\frac{q^2}{(q^2-1)(q-1)}- \frac{1}{2}q^{-2} - \frac{2}{3}q^{-3}$ and is at most $1-q^{-1}+q^{-2}+q^{-3}$. Perhaps the most precise results are obtained using generating functions; the papers \cite{Fu} and \cite{W} use generating functions to show that for fixed $q$, the large $n$ proportion of regular semisimple elements of $GL(n,q)$ approaches $1-q^{-1}$ as $n \rightarrow \infty$. Wall \cite{W} gives error terms for the convergence rate to this limit. The memoir \cite{FNP} applies the generating function approach to obtain results for other finite classical groups.

Estimates on the proportion of regular semisimple elements played a crucial role in the solution of the Boston-Shalev conjecture stating that the proportion of fixed point free elements of a finite simple group in a transitive action on a finite set $X$ with $|X|>1$ is uniformly bounded away from zero \cite{FG}. We also note that the set of regular semisimple elements has been studied from the topological viewpoint by Lehrer \cite{L1} and Lehrer and Segal \cite{LS}.

In this paper we study the closely related problem of enumerating regular semisimple conjugacy classes in finite classical groups. Results for $GL(n,q)$ and $PGL(n,q)$ appear in \cite{L2}, and also in \cite{FJK}, which treats $SL(n,q), U(n,q), SU(n,q)$ as well. Fleischmann \cite{F} treats $PSL(n,q)$ and the groups $PSU(n,q)$, $Sp(2n,q)$, and $PSp(2n,q)$. Results for exceptional Lie groups are on the website \cite{Lu}. Lehrer \cite{L3} discusses the closely related problem of enumerating regular semisimple orbits on the Lie algebra. The problem of enumerating regular semisimple classes in orthogonal groups, to the best of our knowledge, is not treated in the literature, and the main point of the present paper is to fill this gap.
It turns out that in all cases, the answer has a simple form. For example in even characteristic, we show that for $n \geq 2$, the number of regular semisimple classes of $SO^{\pm}(2n,q)$ is \[ q^n-q^{n-1} \mp (-1)^n(q-1) .\] We argue using generating functions, and also show how this approach gives elementary derivations of the known results for $GL(n,q)$, $SL(n,q)$, $U(n,q)$, $SU(n,q)$, and $Sp(2n,q)$.

The organization of this paper is as follows. Section \ref{gl} uses generating functions to count the number of regular semisimple classes of $GL(n,q)$ and $SL(n,q)$. This is also carried out for $U(n,q)$ and $SU(n,q)$ in Section \ref{un} and for $Sp(2n,q)$ in Section \ref{sym}. Section \ref{orth} contains our main new results: simple formulae for the number of regular semisimple classes in the special orthogonal groups. We treat both odd and even characteristic.

\section{Linear groups} \label{gl}

In this section we derive a formula for the number of regular semisimple conjugacy classes of $GL(n,q)$ and $SL(n,q)$.
This result is already known from \cite{FJK}, but we argue using generating functions.

We elaborate a bit on the comments of the introduction.  If $G$ is equal to $GL(n,q)$, $U(n,q)$, $SL(n,q)$, $SU(n,q)$ or $Sp(2n,q)$,
an element is semisimple regular if and only if its minimal polynomial is equal to its characteristic polynomial which has
no multiple roots.  Moreover,  since these groups are simply connected (i.e. the derived subgroups of the corresponding
algebraic group are),  two semisimple elements of $G$ are conjugate if and only if they are conjugate in the algebraic group
if and only if they have the same characteristic polynomials.

Let $N(q;d)$ be the number of monic irreducible polynomials of degree $d$ over $\mathbb{F}_q$ with non-zero constant term. The following lemma is well known; see for instance part c of Lemma 1.3.10 of \cite{FNP}.

\begin{lemma} \label{lem1} \[ \prod_{d \geq 1} (1+u^d)^{-N(q;d)} = \frac{(1+u)(1-qu)}{(1-qu^2)}.\]
\end{lemma}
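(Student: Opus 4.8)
The plan is to reduce the asserted product, whose factors involve $(1+u^d)$, to the more familiar product involving $(1-u^d)$ by means of the elementary identity $1+u^d = (1-u^{2d})/(1-u^d)$, and then to invoke the basic generating function that comes from unique factorization of monic polynomials.

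First I would record the fundamental identity. Write $M(q;d)$ for the total number of monic irreducible polynomials of degree $d$ over $\mathbb{F}_q$. Unique factorization of monic polynomials, together with the fact that there are $q^n$ monic polynomials of degree $n$, gives $\sum_{n \geq 0} q^n u^n = \prod_{d \geq 1}(1-u^d)^{-M(q;d)}$, that is, $\prod_{d \geq 1}(1-u^d)^{-M(q;d)} = (1-qu)^{-1}$. The only monic irreducible polynomial with zero constant term is $x$ itself, so $N(q;d) = M(q;d)$ for every $d \geq 2$, while $N(q;1) = q-1 = M(q;1)-1$. Dividing out the single missing degree-one factor then yields the auxiliary identity
\begin{equation} \label{aux}
\prod_{d \geq 1}(1-u^d)^{-N(q;d)} = \frac{1-u}{1-qu}.
\end{equation}
Equivalently, the left-hand side is the generating function counting monic polynomials not divisible by $x$, of which there are $q^n - q^{n-1}$ in each degree $n \geq 1$; this gives a self-contained derivation of \eqref{aux} that need not even mention $M(q;d)$.

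With \eqref{aux} in hand the computation is short. Using $1+u^d = (1-u^{2d})/(1-u^d)$ I would split
\[
\prod_{d \geq 1}(1+u^d)^{-N(q;d)} = \left[\prod_{d \geq 1}(1-u^d)^{-N(q;d)}\right]^{-1}\cdot \prod_{d \geq 1}(1-u^{2d})^{-N(q;d)}.
\]
The first factor equals $(1-qu)/(1-u)$ by \eqref{aux}, and the second is obtained by substituting $u \mapsto u^2$ in \eqref{aux}, giving $(1-u^2)/(1-qu^2)$. Multiplying these and cancelling $1-u^2 = (1-u)(1+u)$ produces $(1+u)(1-qu)/(1-qu^2)$, exactly the right-hand side.

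I expect the only real content to be establishing \eqref{aux}; everything after that is formal manipulation. The substitution $u \mapsto u^2$ is harmless as an identity of formal power series, since all exponents remain nonnegative, so no convergence questions arise. The one point needing care is the bookkeeping at degree one: excluding the polynomial $x$ must change the exponent only for $d=1$, and by exactly one, which is precisely what makes the clean factor $1-u$ (rather than $(1-u)^q$) appear.
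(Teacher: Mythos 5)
Your proof is correct. Note that the paper itself gives no proof of this lemma at all: it simply cites part (c) of Lemma 1.3.10 of [FNP], so there is no argument in the paper to compare against line by line. Your derivation is the standard one and is essentially what underlies the cited reference: the Euler-product identity $\prod_{d \geq 1}(1-u^d)^{-M(q;d)} = (1-qu)^{-1}$ from unique factorization, the observation that removing the single irreducible $x$ changes only the $d=1$ exponent and introduces the factor $1-u$, and then the substitution $1+u^d = (1-u^{2d})/(1-u^d)$ together with $u \mapsto u^2$ in the auxiliary identity. All three steps check out, the formal-power-series manipulations are legitimate since every factor has constant term $1$ and each degree receives contributions from only finitely many factors, and the final cancellation of $1-u^2 = (1-u)(1+u)$ gives exactly $(1+u)(1-qu)/(1-qu^2)$. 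This is a complete, self-contained proof of a statement the paper leaves to the literature.
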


Theorem \ref{conjgl} counts regular semisimple conjugacy classes in $GL(n,q)$.

\begin{theorem} \label{conjgl} The number of regular semisimple conjugacy classes of the group $GL(n,q)$ is \[ \frac{q^{n+1}-q^n+(-1)^{n+1}(q-1)}{q+1} .\]
\end{theorem}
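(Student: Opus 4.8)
As noted in the excerpt, two semisimple elements of $GL(n,q)$ are conjugate if and only if they have the same characteristic polynomial, and such an element is regular semisimple precisely when its characteristic polynomial has distinct roots (equivalently, is squarefree). Since the element must be invertible, the characteristic polynomial has nonzero constant term. Thus counting regular semisimple classes of $GL(n,q)$ is exactly counting monic, degree-$n$, squarefree polynomials over $\F_q$ with nonzero constant term. A squarefree monic polynomial factors uniquely as a product of \emph{distinct} monic irreducibles, so if I encode each irreducible of degree $d$ (with nonzero constant term) by the variable $u^d$, then the number I want is the coefficient of $u^n$ in
\[
\prod_{d \geq 1} (1 + u^d)^{N(q;d)},
\]
where the exponent $N(q;d)$ counts the available irreducibles of degree $d$ and the factor $(1+u^d)$ records the binary choice of including each one at most once.

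The next step is to evaluate this generating function in closed form. Lemma \ref{lem1} gives the reciprocal product, so I take the reciprocal:
\[
\prod_{d \geq 1} (1 + u^d)^{N(q;d)} = \frac{1 - qu^2}{(1+u)(1-qu)}.
\]
Now I extract the coefficient of $u^n$. The plan is a partial fractions decomposition of the right-hand side in $u$, treating the poles at $u = -1$ and $u = 1/q$; the numerator $1 - qu^2$ is degree $2$ and the denominator is degree $2$, so I first reduce to a constant plus a proper rational function, then split. Expanding each simple factor as a geometric series in $u$ and collecting the coefficient of $u^n$ should produce a combination of $q^n$ and $(-1)^n$ terms, which I expect to simplify to the stated
\[
\frac{q^{n+1} - q^n + (-1)^{n+1}(q-1)}{q+1}.
\]

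The main obstacle is the coefficient extraction: I must handle the partial fraction bookkeeping carefully, since the two geometric contributions come with denominators involving $1+q$ (from evaluating residues at $u=-1$ and $u=1/q$), and the alternating sign from the $u=-1$ pole must combine correctly with the $q^n$ term from the $u=1/q$ pole to yield the factor $(-1)^{n+1}(q-1)$ in the numerator over the common denominator $q+1$. A clean sanity check is to verify the formula against small cases (for instance $n=1$ should give $q-1$, the number of nonzero field elements, and $n=2$ should be checkable directly), which will confirm the constant term of the partial fraction reduction is being tracked correctly.
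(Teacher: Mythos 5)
Your proposal is correct and follows the paper's own argument: identify regular semisimple classes with squarefree monic degree-$n$ polynomials with nonzero constant term, invert Lemma \ref{lem1} to get $\prod_{d\geq 1}(1+u^d)^{N(q;d)} = \frac{1-qu^2}{(1+u)(1-qu)}$, and extract the coefficient of $u^n$. The only cosmetic difference is that you extract the coefficient directly by partial fractions (which does work: the residues give $\frac{q-1}{q+1}\left[q^n-(-1)^n\right]$ for $n\geq 1$), whereas the paper verifies the claimed answer by resumming it into the same rational function.
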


\begin{proof}   As we have seen
above,  the regular semisimple conjugacy classes of $GL(n,q)$ correspond to monic, degree $n$ squarefree polynomials, with non-0 constant term.
Hence the number of such classes is the coefficient of $u^n$ in \[ \prod_{d \geq 1} (1+u^d)^{N(q;d)}. \]
From Lemma \ref{lem1} one obtains that \[ \prod_{d \geq 1} (1+u^d)^{N(q;d)} = \frac{1-qu^2}{(1+u)(1-qu)} .\]
Taking coefficients of $u^n$ gives \[ \frac{q^{n+1}-q^n+(-1)^{n+1}(q-1)}{q+1} .\] Indeed,
\begin{eqnarray*}
& & 1+\sum_{n \geq 1} u^n \left[ \frac{q^{n+1}-q^n+(-1)^{n+1}(q-1)}{q+1} \right] \\
& = & 1+\frac{1}{q+1} \sum_{n \geq 1} [u^nq^{n+1} - u^nq^n - (-u)^n(q-1)] \\
& = & 1+\frac{1}{q+1} \left[ \frac{uq^2}{1-uq} - \frac{uq}{1-uq} + \frac{(q-1)u}{1+u} \right] \\
& = & \frac{1-qu^2}{(1+u)(1-qu)}.
\end{eqnarray*}
\end{proof}

{\it Remark}: The quantity in Theorem \ref{conjgl} is a polynomial in $q$, and can be written as
\[ q^n -2 (q^{n-1}- q^{n-2} + \cdots + q) + 1, \] for $n$ even, and as
\[ q^n -2 (q^{n-1} - q^{n-2} + \cdots - q) -1. \] for $n$ odd.

To treat the special linear groups, some further background is needed. We let $\Omega_{q-1}$ denote the $q-1$st roots of unity in $\mathbb{C}$, and let $\omega$ be an element of $\Omega_{q-1}$. Let $\zeta$ be a generator of the multiplicative group $\mathbb{F}_q^*$. For $\alpha \in \mathbb{F}_q^*$, define $r(\alpha)$ to be the element of $\mathbb{Z}_{q-1}$ such that $\zeta^{r(\alpha)} = \alpha$. For a polynomial $\phi$, define $r(\phi)=r((-1)^{deg(\phi)} \phi(0))$.

The following lemma of Britnell (Identity 3.5 of \cite{B}) will be helpful.

\begin{lemma} \label{brit} If $\omega \neq 1$, then
\[ \prod_{\phi} (1+\omega^{r(\phi)} u^{deg(\phi)}) = \left\{
\begin{array}{ll} 1 & \mbox{if $\omega \neq -1$} \\
\frac{1-qu^2}{1-u^2} & \mbox{if $\omega = -1$} \end{array} \right. \]
Here the product is over $\phi$ which are monic, irreducible polynomials over $\mathbb{F}_q$ satisfying $\phi(0) \neq 0$.
\end{lemma}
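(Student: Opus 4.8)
The plan is to recognize the exponents $\omega^{r(\phi)}$ as values of a multiplicative character and to convert the product into an Euler product (an $L$-function over $\mathbb{F}_q[x]$) whose value can be read off coefficient by coefficient. First, fix the character $\chi \colon \mathbb{F}_q^* \to \mathbb{C}^*$ determined by $\chi(\zeta) = \omega$; since $\zeta$ generates $\mathbb{F}_q^*$ and $\omega \in \Omega_{q-1}$ this is well defined, and $\chi(\alpha) = \omega^{r(\alpha)}$ for every $\alpha \in \mathbb{F}_q^*$. Because $\omega \neq 1$, $\chi$ is a nontrivial character. By the definition of $r(\phi)$ we have $\omega^{r(\phi)} = \chi\bigl((-1)^{\deg \phi}\phi(0)\bigr)$, so I would extend this to a completely multiplicative function $\tilde\chi$ on all monic $f$ with $f(0)\neq 0$ by setting $\tilde\chi(f) = \chi\bigl((-1)^{\deg f}f(0)\bigr)$; complete multiplicativity is immediate since $\deg$ is additive, $(fg)(0)=f(0)g(0)$, and $\chi$ is a homomorphism, while $\tilde\chi(\phi) = \omega^{r(\phi)}$ on the irreducible $\phi$.

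Second, I would establish the auxiliary identity $\prod_{\phi}\bigl(1 - \omega^{r(\phi)}u^{\deg \phi}\bigr) = 1$. By unique factorization in $\mathbb{F}_q[x]$ and complete multiplicativity, the reciprocal product $\prod_{\phi}\bigl(1-\tilde\chi(\phi)u^{\deg \phi}\bigr)^{-1}$ expands, as a formal power series in $u$, to $\sum_{f}\tilde\chi(f)u^{\deg f}$ over monic $f$ with $f(0)\neq 0$. For each $n\geq 1$ the coefficient of $u^n$ is $\sum \chi\bigl((-1)^n f(0)\bigr)$ over such $f$ of degree $n$; the $q^{n-1}$ choices of the intermediate coefficients are free, so this coefficient equals $q^{n-1}\chi\bigl((-1)^n\bigr)\sum_{a\in\mathbb{F}_q^*}\chi(a) = 0$ since $\chi$ is nontrivial. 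As the constant term is $1$, the whole $L$-function equals $1$, which gives the auxiliary identity.

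Third, I would apply the termwise identity $1 + X = (1 - X^2)/(1-X)$ to write the target product as $\prod_{\phi}\bigl(1 - \omega^{2r(\phi)}u^{2\deg \phi}\bigr)\big/\prod_{\phi}\bigl(1 - \omega^{r(\phi)}u^{\deg \phi}\bigr)$. The denominator is $1$ by the previous step, and the numerator is a product of exactly the same shape with $\omega$ replaced by $\omega^2$ and $u$ by $u^2$. If $\omega \neq -1$, then $\omega^2 \neq 1$, the associated character $\chi^2$ is again nontrivial, and the numerator equals $1$, so the product is $1$. If $\omega = -1$, then $\omega^2 = 1$ and the numerator degenerates to $\prod_{\phi}\bigl(1 - u^{2\deg \phi}\bigr)$; the same expansion now with trivial weight simply counts monic polynomials with nonzero constant term, giving $\prod_{\phi}(1-w^{\deg \phi})^{-1} = 1 + (q-1)w/(1-qw) = (1-w)/(1-qw)$, so with $w = u^2$ the numerator equals $(1-qu^2)/(1-u^2)$, the asserted value.

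I expect the only genuine care to lie in justifying the Euler-product expansions as formal power series and in keeping the two cases $\omega^2 = 1$ versus $\omega^2 \neq 1$ straight; the character-sum vanishing that does all the work is elementary. The main obstacle is really the bookkeeping—in particular observing that $\omega = -1$ is the unique value satisfying $\omega \neq 1$ yet $\omega^2 = 1$, which is precisely what produces the two branches of the stated formula.
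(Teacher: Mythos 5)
Your proof is correct and complete. Note, however, that the paper itself does not prove this statement at all: it simply quotes it as Identity 3.5 of Britnell's paper \cite{B}, so there is no internal argument to compare against. What you have supplied is a self-contained derivation in the standard function-field $L$-series style: you package $\omega^{r(\phi)}=\chi\bigl((-1)^{\deg\phi}\phi(0)\bigr)$ as a completely multiplicative weight, use unique factorization to expand $\prod_\phi(1-\tilde\chi(\phi)u^{\deg\phi})^{-1}=\sum_f\tilde\chi(f)u^{\deg f}$, kill every positive-degree coefficient by the orthogonality relation $\sum_{a\in\mathbb{F}_q^*}\chi(a)=0$ for nontrivial $\chi$, and then pass from $1-X$ to $1+X$ via $1+X=(1-X^2)/(1-X)$, which cleanly isolates $\omega=-1$ as the unique case where the squared character becomes trivial and the counting series $(1-qw)/(1-w)$ (with $w=u^2$) appears. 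All the individual steps check out: the complete multiplicativity of $\tilde\chi$, the count $q^{n-1}$ of free intermediate coefficients, the vanishing of the character sum, and the final evaluation $(1-qu^2)/(1-u^2)$. This is essentially the same mechanism Britnell uses, so your argument buys the reader a short, elementary, self-contained proof where the paper relies on an external citation; the only point worth making explicit in a write-up is that the formal-power-series manipulations are legitimate because only finitely many irreducibles occur in each degree.
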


Next we enumerate the regular semisimple conjugacy classes of $SL(n,q)$.

\begin{theorem} \label{conjsl} The number of regular semisimple conjugacy classes of the group $SL(n,q)$ is
\[ \frac{q^{n+1}-q^n+(-1)^{n+1}(q-1)}{q^2-1} \] if $n$ is odd or $q$ is even, and is
\[ \frac{q^{n+1}-q^n - (q-1)}{q^2-1} - 1 \] if $n$ is even and $q$ is odd.
\end{theorem}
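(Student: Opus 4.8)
The plan is to count, in place of conjugacy classes directly, the monic squarefree degree-$n$ polynomials $f$ over $\mathbb{F}_q$ with $f(0) \neq 0$ that arise as characteristic polynomials of elements of $SL(n,q)$. By the simply-connectedness remark above, a semisimple class of $SL(n,q)$ is determined by its characteristic polynomial, and a regular semisimple element has minimal polynomial equal to its (squarefree) characteristic polynomial; conversely every monic squarefree $f$ with $f(0) \neq 0$ and determinant one is realized by a regular semisimple class. Since for a matrix with characteristic polynomial $f$ the determinant equals $(-1)^n f(0)$, the determinant-one condition is exactly $(-1)^n f(0) = 1$, i.e.\ $r(f) = 0$ in $\mathbb{Z}_{q-1}$. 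Because $r$ is additive on factors, writing $f = \prod_i \phi_i$ the constraint reads $\sum_i r(\phi_i) \equiv 0 \pmod{q-1}$, so I would isolate these polynomials by a roots-of-unity filter over $\Omega_{q-1}$.

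Concretely, the number of regular semisimple classes of $SL(n,q)$ equals
\[
\frac{1}{q-1}\sum_{\omega \in \Omega_{q-1}} [u^n] \prod_{\phi} \bigl(1 + \omega^{r(\phi)} u^{\deg(\phi)}\bigr),
\]
where the inner product runs over monic irreducibles $\phi$ with $\phi(0) \neq 0$ and $[u^n]$ denotes the coefficient of $u^n$. The $\omega = 1$ term is handled by Lemma \ref{lem1}: the product becomes $\tfrac{1-qu^2}{(1+u)(1-qu)}$, whose coefficient of $u^n$ is the $GL(n,q)$ count of Theorem \ref{conjgl}. For $\omega \neq 1$ I would invoke Lemma \ref{brit}: every $\omega \neq \pm 1$ contributes $0$ for $n \geq 1$, while $\omega = -1$ (which exists only when $q$ is odd) contributes $[u^n]\tfrac{1-qu^2}{1-u^2}$.

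It then remains to assemble the cases. When $q$ is even there is no $\omega = -1$, so only the $\omega=1$ term survives, and dividing the $GL$ count by $q-1$ yields $\tfrac{q^{n+1}-q^n+(-1)^{n+1}(q-1)}{q^2-1}$; the same happens when $q$ is odd but $n$ is odd, since $\tfrac{1-qu^2}{1-u^2}$ is an even power series and its odd coefficients vanish. When $q$ is odd and $n$ is even with $n \geq 2$, expanding $\tfrac{1-qu^2}{1-u^2} = (1-qu^2)\sum_{k\geq 0} u^{2k}$ gives $[u^n] = 1 - q$; adding this to the $GL$ term and dividing by $q-1$ produces the extra summand $\tfrac{1-q}{q-1} = -1$, giving $\tfrac{q^{n+1}-q^n-(q-1)}{q^2-1} - 1$.

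I expect the main obstacle to be the first step: justifying cleanly that regular semisimple classes of $SL(n,q)$ are in bijection with the admissible characteristic polynomials, i.e.\ that a $GL(n,q)$-class meeting $SL(n,q)$ does not split into several $SL(n,q)$-classes. This is exactly where the simply-connectedness of $SL_n$ enters; equivalently one checks that the determinant map from the centralizer $C_{GL}(g) \cong \prod_i \mathbb{F}_{q^{\deg \phi_i}}^*$ onto $\mathbb{F}_q^*$ is surjective, since on each factor it is the (surjective) norm map. Once this reduction is secured, the remaining work is the routine coefficient extraction and parity bookkeeping outlined above.
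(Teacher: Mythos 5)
Your proposal is correct and follows essentially the same route as the paper: reduce to counting monic squarefree polynomials with constant term $(-1)^n$, apply the roots-of-unity filter over $\Omega_{q-1}$, evaluate the $\omega=1$ term via Lemma \ref{lem1} and the $\omega\neq 1$ terms via Britnell's identity (Lemma \ref{brit}), and split into cases by the parity of $n$ and $q$. Your closing observation about the surjectivity of the norm map on the centralizer is a sound elaboration of the simple-connectedness remark the paper uses for the same purpose.
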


\begin{proof} The number of regular semisimple classes of $SL(n,q)$ is the number of monic, squarefree polynomials with constant
term $(-1)^n$. This is equal to the coefficient of $u^n$ in
\[ \frac{1}{q-1} \sum_{\omega \in \Omega_{q-1}} \prod_{\phi} (1+\omega^{r(\phi)} u^{deg(\phi)}) .\] Here the product is over $\phi$ which are monic, irreducible polynomials over $\mathbb{F}_q$ satisfying $\phi(0) \neq 0$.

Assume that $q$ is even. Then no $\omega \in \Omega_{q-1}$ is equal to $-1$, so Lemmas \ref{lem1} and \ref{brit} imply that the number of regular semisimple classes of $SL(n,q)$ is the coefficient of $u^n$ in \[ \frac{1}{q-1} \frac{1-qu^2}{(1+u)(1-qu)} + \frac{q-2}{q-1} .\] The first term corresponds to $\omega=1$ and the second term corresponds to $\omega \neq \pm 1$. By the proof of Theorem \ref{conjgl}, this is equal to $\frac{q^{n+1}-q^n+(-1)^{n+1}(q-1)}{q^2-1}$.

Next assume that $q$ is odd. Then $-1 \in \Omega_{q-1}$, so Lemmas \ref{lem1} and \ref{brit} imply that the number of regular semisimple classes of $SL(n,q)$ is the coefficient of $u^n$ in \[ \frac{1}{q-1} \frac{1-qu^2}{(1+u)(1-qu)} + \frac{1}{q-1} \frac{1-qu^2}{1-u^2} + \frac{q-3}{q-1}.\] The first term corresponds to $\omega=1$, the second term to $\omega =-1$, and the third term to $\omega \neq \pm 1$. If $n$ is odd, this is easily seen to be equal to $\frac{q^{n+1}-q^n+(-1)^{n+1}(q-1)}{q^2-1}$, and if $n$ is even, this is easily seen to be equal to $\frac{q^{n+1}-q^n - (q-1)}{q^2-1} - 1$.
\end{proof}

{\it Remark}: The quantity $\frac{q^{n+1}-q^n+(-1)^{n+1}(q-1)}{q^2-1}$ is equal to \[ q^{n-1}-q^{n-2}+q^{n-3} - \cdots + (-1)^n q - (-1)^n, \] and for $n$ even, the quantity $\frac{q^{n+1}-q^n - (q-1)}{q^2-1} - 1$ is equal to \[ q^{n-1}-q^{n-2}+q^{n-3} - \cdots + q - 2.\]

\section{Unitary groups} \label{un}

In this section we derive a formula for the number of regular semisimple classes of $U(n,q)$ and $SU(n,q)$. This was first done in \cite{FJK}, using different methods.

To begin we need some background on polynomials. The map $\sigma: x \mapsto x^q$ is an involutory automorphism of $\mathbb{F}_{q^2}$ and it induces an automorphism of the polynomial ring $\mathbb{F}_{q^2}[t]$ in an obvious way, namely $\sigma:\sum_{0 \leq i \leq n} a_i t^i \mapsto \sum_{0 \leq i \leq n} a_i^{\sigma} t^i$. An involutory map $\phi \mapsto \tilde{\phi}$ is defined on those monic polynomials $\phi \in \mathbb{F}_{q^2}[t]$ that have non-zero constant coefficient, by \[ \tilde{\phi}(t) = \phi(0)^{-\sigma} t^{deg(\phi)} \phi^{\sigma}(t^{-1}).\] Thus if
\[ \phi(t) = t^n +a_{n-1}t^{n-1} + \cdots + a_1t + a_0 \] with $a_0 \neq 0$, then its $\sim$-conjugate
is given by \[ \tilde{\phi}(t) = t^n + (a_1a_0^{-1})^{\sigma} t^{n-1} + \cdots + (a_{n-1}a_0^{-1})^{\sigma} t + (a_0^{-1})^{\sigma} .\]

We say that $\phi$ is self-conjugate (or $\sim$-self-conjugate) if $\phi(0) \neq 0$ and $\tilde{\phi}=\phi$. We let $\tilde{N}(q;d)$ denote the number of monic irreducible self-conjugate polynomials $\phi(t)$ of degree $d$ over $\mathbb{F}_{q^2}$, and let $\tilde{M}(q;d)$ denote the number of (unordered) conjugate pairs $\{ \phi,\tilde{\phi} \}$ of monic irreducible polynomials of degree $d$ over $\mathbb{F}_{q^2}$ that are not self conjugate. The following lemma, part b of Lemma 1.3.14 of \cite{FNP}, will be useful.

\begin{lemma} \label{lem2}
\[ \prod_{d \geq 1} (1+u^d)^{-\tilde{N}(q;d)} (1+u^{2d})^{-\tilde{M}(q;d)} = \frac{(1+u^2)(1-qu)}{(1+u)(1-qu^2)}.\]
\end{lemma}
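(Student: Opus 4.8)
The plan is to reduce the claimed identity to a single polynomial count, in the same spirit as the proof of Lemma~\ref{lem1}. Write $Q(u)$ for the left-hand side and set
\[ P(u) = \prod_{d \geq 1} (1-u^d)^{-\tilde{N}(q;d)}(1-u^{2d})^{-\tilde{M}(q;d)}. \]
The first observation is that $P(u)$ has a clean combinatorial meaning: by unique factorization over $\mathbb{F}_{q^2}$, a monic polynomial with nonzero constant term is $\sim$-self-conjugate exactly when every self-conjugate irreducible factor occurs to an arbitrary multiplicity and every non-self-conjugate irreducible occurs in a conjugate pair $\{\phi,\tilde{\phi}\}$ with equal multiplicities. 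Thus the factor $(1-u^d)^{-\tilde{N}(q;d)}$ records the self-conjugate irreducibles of degree $d$ and $(1-u^{2d})^{-\tilde{M}(q;d)}$ records the conjugate pairs (each product $\phi\tilde{\phi}$ contributing degree $2d$), so $P(u) = \sum_{n\geq 0} S(n)u^n$, where $S(n)$ is the number of $\sim$-self-conjugate monic polynomials of degree $n$ with nonzero constant term.

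Next I would pass from $Q$ to $P$ using the elementary identity $1+x = (1-x^2)/(1-x)$. Applying it to $1+u^d$ and to $1+u^{2d}$ and collecting exponents gives
\[ Q(u) = \prod_{d\geq 1}\left(\frac{1-u^{2d}}{1-u^d}\right)^{-\tilde{N}(q;d)}\left(\frac{1-u^{4d}}{1-u^{2d}}\right)^{-\tilde{M}(q;d)} = \frac{P(u^2)}{P(u)}, \]
since the ``numerator'' factors reassemble into $P(u^2)$ and the ``denominator'' factors into $P(u)^{-1}$. This reduces the whole lemma to evaluating $P$.

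The hard part is the count $S(n)$. Writing $\phi(t) = t^n + a_{n-1}t^{n-1}+\cdots + a_0$, the defining relation $\tilde{\phi}=\phi$ together with the displayed formula for $\tilde{\phi}$ translates into $a_0^{q+1}=1$ (so $a_0$ lies in the norm-one subgroup of $\mathbb{F}_{q^2}^*$, giving $q+1$ choices) together with the pairing $a_{n-j} = a_0 a_j^{q}$ for $1 \le j \le n-1$. For $n$ odd the indices $j$ and $n-j$ are always distinct, so $a_1,\dots,a_{(n-1)/2}$ are free and the rest are determined, giving $S(n) = (q+1)q^{n-1}$. For $n$ even the only subtlety is the middle coefficient $a_{n/2}$, which must satisfy $a_{n/2} = a_0 a_{n/2}^{q}$; counting its solutions amounts to noting that $x\mapsto x^{1-q}$ maps $\mathbb{F}_{q^2}^*$ onto the norm-one subgroup as a $(q-1)$-to-one map, so the equation has $q-1$ nonzero solutions plus $x=0$, i.e. $q$ in all, and again $S(n) = (q+1)q^{n-1}$. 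Hence $S(0)=1$ and $S(n) = q^n + q^{n-1}$ for $n\ge 1$, so $P(u) = \frac{1+u}{1-qu}$.

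Finally I would substitute: since $P(u^2) = \frac{1+u^2}{1-qu^2}$, the reduction gives $Q(u) = P(u^2)/P(u) = \frac{(1+u^2)(1-qu)}{(1+u)(1-qu^2)}$, which is the assertion. The only step demanding genuine care is the degree-parity case analysis in computing $S(n)$, and within it the middle-coefficient count in the even case; everything else is formal manipulation of the generating function.
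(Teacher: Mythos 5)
Your argument is correct, and all the key steps check out: the multiplicativity of $\phi \mapsto \tilde{\phi}$ justifies reading $P(u)=\prod_{d}(1-u^d)^{-\tilde{N}(q;d)}(1-u^{2d})^{-\tilde{M}(q;d)}$ as the generating function for $\sim$-self-conjugate monic polynomials with nonzero constant term; the identity $1+x=(1-x^2)/(1-x)$ does give $Q(u)=P(u^2)/P(u)$; and the coefficient count is right, including the consistency of the pairing $a_{n-j}=a_0a_j^{q}$ (which follows from $a_0^{q+1}=1$) and the middle-coefficient count of $q$ solutions in the even-degree case, yielding $S(n)=(q+1)q^{n-1}$ and $P(u)=(1+u)/(1-qu)$. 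The paper itself supplies no proof here --- it simply cites part (b) of Lemma 1.3.14 of \cite{FNP} --- so your write-up fills in that black box; note that your evaluation of $P(u)$ is precisely the content of part (a) of that same lemma in \cite{FNP}, and the passage from $P$ to $Q$ via $P(u^2)/P(u)$ is the standard derivation of part (b) from part (a), so your route coincides with the source the authors are relying on rather than offering a genuinely different one.
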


Theorem \ref{countun} counts the regular semisimple conjugacy classes of $U(n,q)$.

\begin{theorem} \label{countun}
The number of regular semisimple classes of $U(n,q)$ is equal to
\[ \frac{(q+1) [q^{n+1}-q^n + (-1)^{n+1} (-1)^{\lfloor n/2 \rfloor} (q- (-1)^n)]}{q^2+1} .\]
\end{theorem}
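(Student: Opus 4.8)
The plan is to mirror the strategy of Theorems \ref{conjgl} and \ref{conjsl}: translate the counting problem into extracting a coefficient from an explicit generating function, invert Lemma \ref{lem2} to obtain a rational function, and then read off the coefficient by partial fractions.

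First I would argue that the regular semisimple classes of $U(n,q)$ are in bijection with the monic, degree $n$, squarefree, $\sim$-self-conjugate polynomials over $\mathbb{F}_{q^2}$ with nonzero constant term. As in the general linear case, since the relevant algebraic group is simply connected, two semisimple elements are conjugate precisely when they share a characteristic polynomial, and the characteristic polynomial of an element of $U(n,q)$ is exactly a self-conjugate monic polynomial with nonzero constant term; regularity corresponds to this polynomial being squarefree. A squarefree self-conjugate polynomial factors uniquely into distinct self-conjugate irreducibles together with distinct conjugate pairs $\{\phi,\tilde{\phi}\}$ of non-self-conjugate irreducibles, where each such pair must be included as a whole to preserve self-conjugacy. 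Hence the number of these classes is the coefficient of $u^n$ in
\[ \prod_{d \geq 1} (1+u^d)^{\tilde{N}(q;d)} (1+u^{2d})^{\tilde{M}(q;d)}, \]
since a self-conjugate irreducible of degree $d$ contributes a factor $(1+u^d)$ and a non-self-conjugate pair of degree-$d$ irreducibles contributes $(1+u^{2d})$.

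Next I would invoke Lemma \ref{lem2}, whose reciprocal gives the closed form
\[ \prod_{d \geq 1} (1+u^d)^{\tilde{N}(q;d)} (1+u^{2d})^{\tilde{M}(q;d)} = \frac{(1+u)(1-qu^2)}{(1+u^2)(1-qu)}. \]
Because the numerator and denominator both have degree $3$, I would first pull off the constant term, writing this as $1 + \frac{(q+1)u(1-u)}{(1+u^2)(1-qu)}$, and then decompose the remaining proper rational function into partial fractions $\frac{A}{1-qu} + \frac{Bu+C}{1+u^2}$. A short computation gives $A = \frac{q^2-1}{q^2+1}$, $B = \frac{(q+1)^2}{q^2+1}$, and $C = -\frac{q^2-1}{q^2+1}$.

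Finally I would extract the coefficient of $u^n$ for $n \geq 1$ using $\frac{1}{1-qu} = \sum_{m} q^m u^m$ and $\frac{1}{1+u^2} = \sum_{k} (-1)^k u^{2k}$. This yields $Aq^n - A(-1)^{n/2}$ when $n$ is even and $Aq^n + B(-1)^{(n-1)/2}$ when $n$ is odd. Rewriting $Aq^n = \frac{(q+1)(q^{n+1}-q^n)}{q^2+1}$ and noting that the sign $(-1)^{n/2}$ (for $n$ even) and $(-1)^{(n-1)/2}$ (for $n$ odd) are both $(-1)^{\lfloor n/2 \rfloor}$, the extra terms become $-A(-1)^{\lfloor n/2\rfloor}$ and $+B(-1)^{\lfloor n/2\rfloor}$ respectively, which match $\frac{(q+1)}{q^2+1}(-1)^{n+1}(-1)^{\lfloor n/2\rfloor}(q-(-1)^n)$ once one checks that $q-(-1)^n$ equals $q-1$ for $n$ even and $q+1$ for $n$ odd. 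The only delicate point, and the step I expect to be the main obstacle, is keeping the parity-dependent signs from the $1/(1+u^2)$ factor consistent so that the two cases assemble into the single uniform expression; this is pure bookkeeping but easy to get wrong.
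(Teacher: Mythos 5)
Your proposal is correct and follows essentially the same route as the paper: the same identification of regular semisimple classes with squarefree self-conjugate polynomials, the same generating function, and the same inversion of Lemma \ref{lem2} to reduce to extracting $[u^n]$ from $\frac{(1+u)(1-qu^2)}{(1+u^2)(1-qu)}$. The only difference is in the final algebra — you derive the coefficient forward by partial fractions (and your values $A=\frac{q^2-1}{q^2+1}$, $B=\frac{(q+1)^2}{q^2+1}$, $C=-\frac{q^2-1}{q^2+1}$ check out), whereas the paper verifies the stated formula by resumming it into the same rational function; both are routine and equivalent.
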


\begin{proof} The semisimple conjugacy classes of $U(n,q)$ correspond to monic, degree n self-conjugate polynomials over $\mathbb{F}_{q^2}$ with non-zero constant term, and therefore the  regular semisimple conjugacy classes of $U(n,q)$ correspond to squarefree, monic, degree $n$ self-conjugate polynomials over $\mathbb{F}_{q^2}$ with non-zero constant term.

It follows that the number of regular semisimple conjugacy classes of $U(n,q)$ is the coefficient of $u^n$ in
\[ \prod_{d \geq 1} (1+u^d)^{\tilde{N}(q;d)} (1+u^{2d})^{\tilde{M}(q;d)} .\] By Lemma \ref{lem2}, this is the coefficient of $u^n$ in \[ \frac{(1+u)(1-qu^2)}{(1+u^2)(1-qu)}. \]

One computes that
\begin{eqnarray*}
& & 1 + \sum_{n \geq 1} u^n \left[ \frac{(q+1)(q^{n+1}-q^n + (-1)^{n+1} (-1)^{\lfloor n/2 \rfloor} (q- (-1)^n))}{q^2+1} \right] \\
& = & 1+\frac{q+1}{q^2+1} \sum_{n \geq 1} u^n(q^{n+1}-q^n) - \frac{q(q+1)}{q^2+1} \sum_{n \geq 1} u^n (-1)^n (-1)^{\lfloor n/2 \rfloor} \\
& & + \frac{q+1}{q^2+1} \sum_{n \geq 1} u^n (-1)^{\lfloor n/2 \rfloor}.
\end{eqnarray*} Since $\sum_{n \geq 1} u^n (-1)^n (-1)^{\lfloor n/2 \rfloor}= \frac{-u-u^2}{1+u^2}$ and $\sum_{n \geq 1} u^n (-1)^{\lfloor n/2 \rfloor} = \frac{u-u^2}{1+u^2}$ , this simplifies to

\begin{eqnarray*}
& & 1+ \frac{q+1}{q^2+1} \left[ \frac{uq^2}{1-uq} - \frac{uq}{1-uq} \right] - \frac{q(q+1)}{q^2+1} \left[ \frac{-u-u^2}{1+u^2} \right] + \frac{q+1}{q^2+1} \frac{u-u^2}{1+u^2} \\
& = & \frac{(1+u)(1-qu^2)}{(1+u^2)(1-qu)},
\end{eqnarray*} as desired.
\end{proof}

{\it Remark}: From Theorem \ref{countun}, one sees that the number of regular semisimple classes of $U(n,q)$ can be rewritten
\[ q^n -2 (q^{n-2} - q^{n-4} + \cdots + (-1)^{n/2} q^2) + (-1)^{n/2}\] if $n$ is even, and
\[ q^n - 2 (q^{n-2} - q^{n-4} + \cdots + (-1)^{(n-1)/2} q^3) + (-1)^{(n-1)/2} (2q+1) \] if $n \geq 3$ is odd.

To treat the case of $SU(n,q)$, some further definitions are needed. We let $\Omega_{q+1}$ denote the $q+1$st roots of unity in $\mathbb{C}$. We let $\zeta$ be a generator
of the cyclic subgroup of order $q+1$ in $\mathbb{F}_{q^2}^*$. For $\phi$ a monic irreducible polynomial over $\mathbb{F}_{q^2}$ with non-0 constant term, we define $s(\phi) \in \mathbb{Z}_{q+1}$ by
\[ \zeta^{s(\phi)} = \left\{
\begin{array}{ll} (-1)^{deg(\phi)} \phi(0) & \mbox{if $\phi= \tilde{\phi}$} \\
\phi(0) \tilde{\phi}(0) & \mbox{if $\phi \neq \tilde{\phi}$} \end{array} \right. \]

The following identity of Britnell (Identity 4.3 of \cite{B2}) will be helpful.

\begin{lemma} \label{brit2} For $\omega \neq 1 \in \Omega_{q+1}$, \[ \prod_{\phi=\tilde{\phi}} (1+\omega^{s(\phi)} u^{deg(\phi)}) \prod_{\{\phi,\tilde{\phi}\} \atop \phi \neq \tilde{\phi}}
(1+\omega^{s(\phi)} u^{2 deg(\phi)}) \] is equal to
\[ \left\{
\begin{array}{ll} 1 & \mbox{if $\omega \neq -1$} \\
\frac{1-qu^2}{1+u^2} & \mbox{if $\omega = -1$} \end{array} \right. \] Here $\phi$ ranges over monic irreducible polynomials over $\mathbb{F}_{q^2}$ with non-0 constant term.
\end{lemma}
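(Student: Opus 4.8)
The plan is to mirror, in the Hermitian setting over $\mathbb{F}_{q^2}$, the argument behind the linear analogue of Lemma \ref{brit}. Write $C$ for the cyclic subgroup of order $q+1$ in $\mathbb{F}_{q^2}^*$ generated by $\zeta$, and let $\chi_\omega\colon C\to\mathbb{C}^*$ be the character determined by $\chi_\omega(\zeta)=\omega$; then $\omega^{s(\phi)}=\chi_\omega(\zeta^{s(\phi)})$, so $\omega^{s(\phi)}=\chi_\omega((-1)^{\deg\phi}\phi(0))$ for self-conjugate $\phi$ and $\omega^{s(\phi)}=\chi_\omega(\phi(0)\tilde\phi(0))$ for a conjugate pair. (One checks first that both arguments lie in $C$, using $\tilde\phi(0)=\phi(0)^{-q}$.) Expanding the product in the statement, the coefficient of $u^n$ is a sum over squarefree monic self-conjugate $f$ of degree $n$ with $f(0)\neq 0$, each weighted by the product of the $\omega^{s(\cdot)}$ over its self-conjugate irreducible factors and conjugate pairs. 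Since $(-1)^{\deg(\cdot)}(\cdot)(0)$ is multiplicative and $\sum_i\deg\phi_i\equiv\deg f\pmod 2$, this weight collapses to $\chi_\omega((-1)^{\deg f}f(0))$, so the product equals $\sum_f\chi_\omega((-1)^{\deg f}f(0))u^{\deg f}$ over squarefree self-conjugate $f$.

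Next I would pass from squarefree to arbitrary polynomials via $1+x=(1-x^2)/(1-x)$ applied to each factor. Setting
\[ G(\omega,u)=\prod_{\phi=\tilde\phi}\frac{1}{1-\omega^{s(\phi)}u^{\deg\phi}}\prod_{\{\phi,\tilde\phi\},\,\phi\neq\tilde\phi}\frac{1}{1-\omega^{s(\phi)}u^{2\deg\phi}}, \]
the same expansion gives $G(\omega,u)=\sum_f\chi_\omega((-1)^{\deg f}f(0))u^{\deg f}$, now over \emph{all} monic self-conjugate $f$ with $f(0)\neq 0$. Applying $1+x=(1-x^2)/(1-x)$ factorwise and matching the numerators with $G$ evaluated at $(\omega^2,u^2)$, the product $P(\omega,u)$ in the statement equals $G(\omega,u)/G(\omega^2,u^2)$.

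The heart of the matter is evaluating $G$. I claim that for every $n\geq 1$ and every $c\in C$ there are exactly $q^{n-1}$ monic self-conjugate polynomials of degree $n$ with constant term $c$. Writing $f(t)=t^n+a_{n-1}t^{n-1}+\cdots+a_0$, self-conjugacy is equivalent to $a_0\in C$ together with $a_{n-i}=(a_i/a_0)^\sigma$ for $1\le i\le n-1$. For $i\neq n-i$ this determines $a_{n-i}$ from a freely chosen $a_i\in\mathbb{F}_{q^2}$, giving $q^2$ choices per pair; for $n$ even the middle coefficient satisfies $a_{n/2}=(a_{n/2}/a_0)^\sigma$, which has exactly $q$ solutions (the value $0$ together with the $q-1$ solutions of $x^{q-1}=a_0^{q}$, using that $x\mapsto x^{q-1}$ maps $\mathbb{F}_{q^2}^*$ onto $C$ with kernel of size $q-1$). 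Either way the count is $q^{n-1}$, independent of $c$. Hence $[u^n]G(\omega,u)=q^{n-1}\chi_\omega((-1)^n)\sum_{c\in C}\chi_\omega(c)$, which vanishes for $\omega\neq 1$ by orthogonality, so $G(\omega,u)=1$ whenever $\omega\neq 1$. The same count gives $G(1,u)=1+\sum_{n\geq1}(q+1)q^{n-1}u^{n}=\frac{1+u}{1-qu}$.

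Finally I would combine these. If $\omega\neq\pm1$ then both $\omega$ and $\omega^2$ differ from $1$, so $P(\omega,u)=1/1=1$. If $\omega=-1$ (possible only for $q$ odd) then $G(\omega,u)=1$ while $\omega^2=1$, so $P(-1,u)=1/G(1,u^2)=(1-qu^2)/(1+u^2)$, as claimed. \emph{The main obstacle is the counting step}: showing that the constant term of a self-conjugate polynomial of fixed degree is equidistributed over $C$ with multiplicity $q^{n-1}$, and in particular pinning down the number of admissible middle coefficients in even degree through the $(q-1)$st power map. Everything else is formal power-series manipulation and character orthogonality, mirroring the linear analogue recorded in Lemma \ref{brit}.
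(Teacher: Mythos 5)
Your proof is correct. Note, however, that the paper does not prove this statement at all: it imports it verbatim as Identity 4.3 of Britnell \cite{B2}, so there is no internal argument to compare against; what you have written is a self-contained derivation of the imported identity. Your route is the natural one and all the delicate points check out: the reduction from the squarefree product to the full product via $1+x=(1-x^2)/(1-x)$ correctly yields $P(\omega,u)=G(\omega,u)/G(\omega^2,u^2)$; the multiplicativity of $\phi\mapsto(-1)^{\deg\phi}\phi(0)$ (together with $(-1)^{\deg(\phi\tilde\phi)}=1$ for conjugate pairs) collapses the weight to $\chi_\omega((-1)^{\deg f}f(0))$; and the counting step is right, since the self-conjugacy conditions $a_{n-i}=(a_i/a_0)^{\sigma}$ for $i$ and $n-i$ are equivalent to one another precisely because $a_0^{q+1}=1$, and the middle-coefficient equation $x^{q-1}=a_0^{q}$ has exactly $q-1$ solutions because the $(q-1)$st power map surjects onto the order-$(q+1)$ subgroup containing $a_0^q$. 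This gives $G(\omega,u)=1$ for $\omega\neq 1$ by orthogonality and $G(1,u)=(1+u)/(1-qu)$, whence the two cases of the lemma. A useful consistency check you could add in one line: your framework recovers Lemma \ref{lem2} as the case $\omega=1$, since $G(1,u)/G(1,u^2)=\frac{(1+u)(1-qu^2)}{(1+u^2)(1-qu)}$, which matches the identity the paper does quote from \cite{FNP}.
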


Next we enumerate the regular semisimple conjugacy classes of $SU(n,q)$.

\begin{theorem} \label{conjsu} The number of regular semisimple conjugacy classes of the group $SU(n,q)$ is
\[ \frac{q^{n+1}-q^n+(-1)^{n+1} (-1)^{\lfloor n/2 \rfloor} (q-(-1)^n)}{q^2+1} \]
if $n$ is odd or $q$ is even, and is
\[ \frac{q^{n+1}-q^n-(-1)^{n/2}(q-1)}{q^2+1} + (-1)^{n/2} \]
if $n$ is even and $q$ is odd.
\end{theorem}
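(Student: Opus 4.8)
The plan is to mimic the proof of Theorem~\ref{conjsl} for $SL(n,q)$, replacing the role of $\Omega_{q-1}$ and Lemma~\ref{brit} by $\Omega_{q+1}$ and Lemma~\ref{brit2}. A regular semisimple class of $SU(n,q)$ is a regular semisimple class of $U(n,q)$ whose elements have determinant $1$, so by the description used in Theorem~\ref{countun} these correspond to squarefree, monic, degree $n$ self-conjugate polynomials over $\mathbb{F}_{q^2}$ with nonzero constant term and trivial determinant. The determinant of an element of $U(n,q)$ lies in the norm-one (order $q+1$) subgroup of $\mathbb{F}_{q^2}^*$ generated by $\zeta$; and by the definition of $s(\phi)$ preceding Lemma~\ref{brit2}, the product of roots contributed by a self-conjugate factor or by a conjugate pair is exactly $\zeta^{s(\phi)}$. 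Hence the determinant equals $\zeta^{\sum s(\phi)}$ and is trivial precisely when the total $s$-value vanishes in $\mathbb{Z}_{q+1}$. I would isolate this condition by orthogonality of the characters of $\Omega_{q+1}$.

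Concretely, the number of regular semisimple classes of $SU(n,q)$ is the coefficient of $u^n$ in
\[ \frac{1}{q+1}\sum_{\omega\in\Omega_{q+1}} \prod_{\phi=\tilde{\phi}}\bigl(1+\omega^{s(\phi)}u^{deg(\phi)}\bigr)\prod_{\{\phi,\tilde{\phi}\},\,\phi\neq\tilde{\phi}}\bigl(1+\omega^{s(\phi)}u^{2deg(\phi)}\bigr), \]
the products running over monic irreducibles over $\mathbb{F}_{q^2}$ with nonzero constant term. The next step is to evaluate the inner product for each $\omega$. For $\omega=1$ every $\omega^{s(\phi)}$ is $1$, so the product is the generating function appearing in the proof of Theorem~\ref{countun}, namely $\frac{(1+u)(1-qu^2)}{(1+u^2)(1-qu)}$; dividing its $u^n$ coefficient by $q+1$ recovers exactly the first displayed formula. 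For $\omega\neq 1$, Lemma~\ref{brit2} gives the product as $1$ unless $\omega=-1$, in which case it equals $\frac{1-qu^2}{1+u^2}$.

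The parity of $q$ enters through whether $-1\in\Omega_{q+1}$, which holds iff $q+1$ is even, i.e. iff $q$ is odd. When $q$ is even, no $\omega\neq 1$ contributes beyond the constant $1$, which affects only the $u^0$ term, so for $n\geq 1$ the answer is exactly the first formula. When $q$ is odd there is an extra contribution equal to the coefficient of $u^n$ in $\frac{1}{q+1}\cdot\frac{1-qu^2}{1+u^2}$. Writing $\frac{1-qu^2}{1+u^2}=1-(q+1)\frac{u^2}{1+u^2}$ and expanding the geometric series, I would check this coefficient is $0$ for odd $n$ and $(-1)^{n/2}$ for even $n\geq 2$. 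Thus for odd $n$ the answer is again the first formula, while for even $n$ we add $(-1)^{n/2}$; simplifying the first formula in the even case (where $(-1)^{n+1}=-1$, $(-1)^{\lfloor n/2\rfloor}=(-1)^{n/2}$, and $(-1)^n=1$) to $\frac{q^{n+1}-q^n-(-1)^{n/2}(q-1)}{q^2+1}$ yields the second displayed formula.

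I expect the only genuine subtlety to be the bookkeeping in the first paragraph: verifying that membership in $SU$ is exactly the vanishing of $\sum s(\phi)$ mod $q+1$, so that the character sum over $\Omega_{q+1}$ correctly detects it. Once that reduction and Lemma~\ref{brit2} are in place, the remainder is routine coefficient extraction, which leans on Theorem~\ref{countun} for the bulk of the answer and on a short geometric-series computation for the $\omega=-1$ correction.
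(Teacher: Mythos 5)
Your proposal is correct and follows essentially the same route as the paper: the same character sum over $\Omega_{q+1}$, the same use of Lemma \ref{lem2} for $\omega=1$ and Lemma \ref{brit2} for $\omega\neq 1$, and the same even/odd-$q$ case split, with the $\omega=-1$ term contributing $(-1)^{n/2}$ only for even $n$. The only difference is that you spell out the geometric-series computation for the $\omega=-1$ coefficient and the determinant bookkeeping via $\sum s(\phi)$, both of which the paper leaves implicit.
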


\begin{proof} The number of regular semisimple classes of $SU(n,q)$ is the number of squarefree characteristic polynomials of
elements of $SU(n,q)$.  This is the coefficient of $u^n$ in
\[ \frac{1}{q+1} \sum_{\omega \in \Omega_{q+1}} \prod_{\phi=\tilde{\phi}} (1+\omega^{s(\phi)} u^{deg(\phi)}) \prod_{\{\phi,\tilde{\phi}\} \atop \phi \neq \tilde{\phi}}
(1+\omega^{s(\phi)} u^{2 deg(\phi)}).\] Here $\phi$ ranges over monic irreducible polynomials over $\mathbb{F}_{q^2}$ with non-0 constant term.

Assume that $q$ is even. Then no $\omega \in \Omega_{q+1}$ is equal to $-1$, so Lemmas \ref{lem2} and \ref{brit2} imply that the number of regular semisimple
classes of $SU(n,q)$ is the coefficient of $u^n$ in
\[ \frac{1}{q+1} \frac{(1+u)(1-qu^2)}{(1+u^2)(1-qu)} + \frac{q}{q+1}.\] The first term corresponds to $\omega=1$ and the second term to $\omega \neq 1$. By the proof of Theorem \ref{countun}, this is $\frac{q^{n+1}-q^n+(-1)^{n+1} (-1)^{\lfloor n/2 \rfloor} (q-(-1)^n)}{q^2+1}$.

Next assume that $q$ is odd. Then $-1 \in \Omega_{q+1}$, so Lemmas \ref{lem2} and \ref{brit2} imply that the number of regular semisimple
classes of $SU(n,q)$ is the coefficient of $u^n$ in
\[ \frac{1}{q+1} \frac{(1+u)(1-qu^2)}{(1+u^2)(1-qu)} + \frac{1}{q+1} \frac{1-qu^2}{1+u^2} + \frac{q-1}{q+1} .\] The first term corresponds to $\omega=1$, the second term to $\omega= -1$, and the third term to $\omega \neq \pm 1$. This is
equal to $\frac{q^{n+1}-q^n+(-1)^{n+1} (-1)^{\lfloor n/2 \rfloor} (q-(-1)^n)}{q^2+1}$ for $n$ odd, and equal to
$\frac{q^{n+1}-q^n-(-1)^{n/2}(q-1)}{q^2+1} + (-1)^{n/2}$ if $n$ is even.
\end{proof}

\section{Symplectic groups} \label{sym}

This section uses generating functions to enumerate regular semisimple classes in $Sp(2n,q)$.

First we need some background on polynomials. For a monic polynomial $\phi(t) \in \mathbb{F}_q[t]$ of degree $n$ with non-zero constant term, we define the $*$-conjugate $\phi^*(t)$ by
\[ \phi^*(t)=\phi(0)^{-1} t^n \phi(t^{-1}).\] Thus if
\[ \phi(t) = t^n + a_{n-1}t^{n-1} + \cdots + a_1 t + a_0 \] then
\[ \phi^*(t) = t^n + a_1a_0^{-1} t^{n-1} + \cdots + a_{n-1}a_0^{-1} t + a_0^{-1}. \] We say that $\phi$ is self-conjugate (or $*$-self conjugate) if $\phi(0) \neq 0$ and $\phi^*=\phi$.

We let $N^*(q;d)$ denote the number of monic irreducible self-conjugate polynomials $\phi(t)$ of degree $d$ over $\mathbb{F}_{q}$, and let $M^*(q;d)$ denote the number of (unordered) conjugate pairs $\{ \phi,\phi^* \}$ of monic irreducible polynomials of degree $d$ over $\mathbb{F}_{q}$ that are not self conjugate. The following lemma, part b of Lemma 1.3.17 of \cite{FNP}, will be useful.

\begin{lemma} \label{lem3} Let $e=1$ if the characteristic is even and let $e=2$ if the characteristic is odd. Then
\[ \prod_{d \geq 1} (1+u^d)^{-N^*(q;2d)} (1+u^{d})^{-M^*(q;d)} = \frac{(1+u)^e (1-qu)}{1-qu^2}.\]
\end{lemma}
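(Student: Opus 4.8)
The plan is to prove the equivalent statement that the reciprocal of the left-hand side, namely $P(u) := \prod_{d \geq 1}(1+u^d)^{N^*(q;2d)}(1+u^d)^{M^*(q;d)}$, equals $\frac{1-qu^2}{(1+u)^e(1-qu)}$. The starting observation is that $P(u)$ is an Euler product over two kinds of building blocks: the self-conjugate irreducibles, which apart from $t-1$ and $t+1$ all have even degree $2d$ because their roots are stable under $\alpha \mapsto \alpha^{-1}$ and none equals $\pm 1$; and the conjugate pairs $\{\phi,\phi^*\}$ with $\phi \neq \phi^*$. Each block of actual degree $2d$ is weighted by $u^d$, so $P(u)$ is the generating function, in the variable $u$ tracking half the degree, for \emph{squarefree} monic self-conjugate polynomials with nonzero constant term not divisible by $t \pm 1$. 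Using the elementary squarefree-from-all principle $\prod_B (1+x_B) = \big(\prod_B(1-x_B)^{-1}\big)\big/\big(\prod_B(1-x_B^2)^{-1}\big)$, I would write $P(u) = F(u)/F(u^2)$, where $F(u) = \prod_{d \geq 1}(1-u^d)^{-N^*(q;2d)-M^*(q;d)}$ is the corresponding ``count all'' generating function.

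Next I would evaluate $F(u)$ in closed form. Introduce $G(v) = \sum_{m \geq 0} g_m v^m$, the ordinary generating function in which $v$ tracks the \emph{actual} degree and $g_m$ is the number of all monic self-conjugate polynomials of degree $m$ with nonzero constant term. Factoring $G$ over the same building blocks, together now with the degree-one blocks $t \pm 1$, which contribute $(1-v)^{-e}$ (a single block in even characteristic and two in odd characteristic), gives $G(v) = (1-v)^{-e}\prod_{d \geq 1}(1-v^{2d})^{-N^*(q;2d)-M^*(q;d)}$. Comparing this with the definition of $F$ under the substitution $u = v^2$ yields the bridge identity $F(u) = (1-v)^{e}\,G(v)\big|_{v = u^{1/2}}$, which reduces the whole lemma to computing $G(v)$ explicitly.

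The crux is the direct count of $g_m$. A monic self-conjugate polynomial satisfies $f(0)^2 = 1$, so $f(0) = \pm 1$, and $f^* = f$ forces $f$ to be either palindromic ($a_i = a_{m-i}$, when $f(0)=1$) or anti-palindromic ($a_i = -a_{m-i}$, when $f(0)=-1$). Counting free coefficients shows there are $q^{\lfloor m/2\rfloor}$ palindromic ones; the anti-palindromic count is where the characteristic intervenes, through the middle-coefficient relation $2a_{m/2}=0$ for even $m$, and this is the step I expect to be the main obstacle to state cleanly and uniformly. Carrying out both cases and summing the resulting geometric series gives $G(v) = \frac{(1+v)^e}{1-qv^2}$. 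Substituting back produces $F(u) = \frac{(1-u)^e}{1-qu}$, whence $P(u) = F(u)/F(u^2) = \frac{(1-u)^e(1-qu^2)}{(1-qu)(1-u^2)^e} = \frac{1-qu^2}{(1+u)^e(1-qu)}$. Taking reciprocals gives the claimed identity, and the factor $(1+u)^e$ is seen to originate entirely from the characteristic-dependent count of self-conjugate polynomials.
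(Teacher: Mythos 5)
Your argument is correct and complete in outline. Note that the paper does not prove this lemma at all --- it simply cites part~b of Lemma~1.3.17 of [FNP] --- so what you have produced is a self-contained derivation of a quoted fact, and it follows the standard route: interpret the reciprocal product $P(u)$ as the Euler product over squarefree self-conjugate polynomials prime to $t\pm 1$ (using that every self-conjugate irreducible other than $t\pm1$ has even degree, since the involution $\alpha\mapsto\alpha^{-1}$ on a root set of odd size would have a fixed point $\pm1$), reduce via $\prod_B(1+x_B)=\prod_B(1-x_B)^{-1}/\prod_B(1-x_B^2)^{-1}$ to the ``count all'' series $F$, and evaluate $F$ by the direct count of all monic self-conjugate polynomials of each degree. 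Your counts check out: $f^*=f$ forces $f(0)^2=1$ and hence $f$ palindromic or anti-palindromic, giving $g_{2j}=q^j+q^{j-1}$, $g_{2j+1}=2q^j$ in odd characteristic and $g_m=q^{\lfloor m/2\rfloor}$ in even characteristic, i.e.\ $G(v)=(1+v)^e/(1-qv^2)$, from which $F(u)=(1-u)^e/(1-qu)$ and the claimed identity follow. The one step you flag as delicate is indeed the only one needing care, and the issue there is not really the relation $2a_{m/2}=0$ but the fact that in characteristic $2$ the conditions $f(0)=1$ and $f(0)=-1$ coincide, so the two families collapse into one; that collapse is exactly the source of the exponent $e$. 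With that observation made explicit, the proof is sound, and it has the virtue of making the lemma elementary rather than imported.
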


Theorem \ref{conjsym} is the main result of this section, and was proved in \cite{F} by different methods.

\begin{theorem} \label{conjsym} The number of regular semisimple conjugacy classes of the group $Sp(2n,q)$ is

\[  \begin{array}{ll}
\frac{q-1}{q+1} (q^n + (-1)^{n-1}) & \mbox{if q is even}\\
(-1)^n (n+1) + \sum_{i=0}^{n-1} (-1)^i (2i+1) q^{n-i} & \mbox{if q is odd}
                                                \end{array}
                                         \]
\end{theorem}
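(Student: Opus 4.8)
The plan is to follow the same generating-function strategy already used for $GL$, $SL$, $U$, and $SU$. The regular semisimple classes of $Sp(2n,q)$ correspond to the squarefree, degree-$2n$ characteristic polynomials that can occur, namely monic squarefree polynomials $f$ of degree $2n$ with $f(0)\neq 0$ satisfying the $*$-self-conjugacy constraint $f^*=f$ appropriate to the symplectic form. Each such polynomial is a product of distinct monic irreducible $*$-self-conjugate factors (which must have even degree, say $2d$, contributing a factor $(1+u^{d})$ per class in the degree-$d$ bookkeeping of Lemma \ref{lem3}) together with distinct conjugate pairs $\{\phi,\phi^*\}$ of non-self-conjugate irreducibles of each degree $d$ (each such pair contributing $(1+u^{d})$). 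Thus the number of regular semisimple classes is the coefficient of $u^{n}$ in
\[ \prod_{d\ge 1}(1+u^{d})^{N^*(q;2d)}(1+u^{d})^{M^*(q;d)}, \]
and by Lemma \ref{lem3} this equals the coefficient of $u^{n}$ in $\dfrac{1-qu^2}{(1+u)^{e}(1-qu)}$, where $e=1$ in even characteristic and $e=2$ in odd characteristic.

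**Even characteristic.** Here $e=1$, so the generating function is $\dfrac{1-qu^2}{(1+u)(1-qu)}$, which is exactly the $GL$ generating function from the proof of Theorem \ref{conjgl} but now read off at $u^n$ (rather than $u^{2n}$). So I would simply invoke that computation: the coefficient of $u^{n}$ is $\dfrac{q^{n+1}-q^n+(-1)^{n+1}(q-1)}{q+1}$, and a one-line algebraic rearrangement rewrites this as $\dfrac{q-1}{q+1}\bigl(q^{n}+(-1)^{n-1}\bigr)$, matching the claimed even-characteristic formula. This part should be immediate.

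**Odd characteristic.** Here $e=2$ and the target is the coefficient of $u^{n}$ in $F(u)=\dfrac{1-qu^2}{(1+u)^{2}(1-qu)}$. The clean route is partial fractions: since $1-qu^2=(1-\sqrt{q}\,u)(1+\sqrt{q}\,u)$ does not cancel the denominator, write
\[ F(u)=\frac{A}{1-qu}+\frac{B}{1+u}+\frac{C}{(1+u)^{2}} \]
and solve for $A,B,C$ by evaluating at $u=1/q$, at $u=-1$, and matching. Then use the expansions $\frac{1}{1-qu}=\sum q^{n}u^{n}$, $\frac{1}{1+u}=\sum(-1)^{n}u^{n}$, and $\frac{1}{(1+u)^{2}}=\sum(-1)^{n}(n+1)u^{n}$ to extract the coefficient of $u^{n}$, which will have the shape $A q^{n}+(-1)^{n}\bigl(B+C(n+1)\bigr)$. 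To match the stated answer $(-1)^n(n+1)+\sum_{i=0}^{n-1}(-1)^i(2i+1)q^{n-i}$, I would instead verify the identity in the reverse direction, exactly as done for $GL$ and $U$: form $1+\sum_{n\ge 1}u^{n}\bigl[(-1)^n(n+1)+\sum_{i=0}^{n-1}(-1)^i(2i+1)q^{n-i}\bigr]$, interchange the order of summation in the double sum so that the inner geometric series over the $q$-power index collapses, and collect the resulting rational functions. The double-sum term is $\sum_{i\ge 0}(-1)^i(2i+1)q^{-i}\sum_{n>i}(qu)^{n}\cdot\!$-type bookkeeping, which resums to a rational function whose denominator is $(1-qu)(1+u)^{2}$ after using $\sum_{i\ge0}(-1)^i(2i+1)x^{i}=\frac{1-x}{(1+x)^{2}}$; the pure $(-1)^n(n+1)$ term contributes $\frac{1}{(1+u)^2}-1$. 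Adding these and simplifying should return $F(u)=\frac{1-qu^2}{(1+u)^2(1-qu)}$.

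**Main obstacle.** The conceptual content is light; the risk is purely algebraic. The even case and the reduction to Lemma \ref{lem3} are routine. The hard part will be the odd-characteristic coefficient extraction: because the denominator carries a repeated factor $(1+u)^2$, the coefficient is not a single geometric term but mixes $q^{n}$, $(-1)^{n}$, and $(-1)^{n}n$ contributions, and the stated closed form packages these as a telescoped finite sum $\sum_{i=0}^{n-1}(-1)^i(2i+1)q^{n-i}$. Getting the bookkeeping right when interchanging the order of summation and correctly invoking $\sum_{i\ge0}(-1)^i(2i+1)x^i=\frac{1-x}{(1+x)^2}$ is where care is needed; I would double-check the final identity against small values of $n$ (say $n=1,2,3$) before declaring it proved.
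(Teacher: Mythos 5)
Your proposal follows the paper's proof essentially verbatim: the same reduction via Lemma \ref{lem3} to extracting the coefficient of $u^n$ in $(1-qu^2)/((1+u)^e(1-qu))$, the same identification with the $GL(n,q)$ generating function in even characteristic, and only a cosmetically different coefficient extraction in the odd case (partial fractions or reverse verification, where the paper uses a direct convolution $\sum_{j=0}^n(-1)^j(j+1)q^{n-j}$ followed by subtracting $q$ times the $[u^{n-2}]$ term). The one point you gloss over, which the paper makes explicit, is why the degree-one self-conjugate factors $z\pm 1$ cannot occur in these characteristic polynomials: this requires the parity observation that the $\pm 1$-eigenspaces of a symplectic element are even-dimensional, not the (false in degree one) claim that every self-conjugate irreducible has even degree.
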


\begin{proof} The semisimple conjugacy classes of $Sp(2n,q)$ correspond to monic, degree 2n, self-conjugate polynomials over $\mathbb{F}_{q}$ with constant term 1, and the regular semisimple conjugacy classes of $Sp(2n,q)$ correspond to squarefree, monic, degree 2n self-conjugate polynomials over $\mathbb{F}_{q}$ with constant term 1. Note that by parity arguments, a regular semisimple conjugacy class of $Sp(2n,q)$ can not have a $z-1$ factor.

It follows that the number of regular semisimple conjugacy classes of $Sp(2n,q)$ is the coefficient of $u^n$ in
\[ \prod_{d \geq 1} (1+u^d)^{N^*(q;2d)} (1+u^{d})^{M^*(q;d)} .\] By Lemma \ref{lem3}, this is the coefficient of $u^n$ in \[ \frac{(1-qu^2)}{(1+u)^e(1-qu)}, \] where $e=1$ if $q$ is even, and $e=2$ if $q$ is odd.

Now if $q$ is even, comparing with the proof of Theorem \ref{conjgl} shows that this is the same as the generating function for regular semisimple conjugacy classes of $GL(n,q)$, so the result follows from Theorem \ref{conjgl}. If $q$ is odd, we need to compute the coefficient of $u^n$ in $\frac{1-qu^2}{(1+u)^2(1-qu)}$. Letting $[u^n]f(u)$ denote the coefficient of $u^n$ in a power series $f(u)$, we obtain that
\begin{eqnarray*}
[u^n] \frac{1}{(1+u)^2(1-qu)} & = & \sum_{j=0}^n [u^j] \frac{1}{(1+u)^2} [u^{n-j}] \frac{1}{1-qu} \\
& = & \sum_{j=0}^n (-1)^j (j+1) q^{n-j}.
\end{eqnarray*}
Thus
\begin{eqnarray*}
& & [u^n] \frac{1-qu^2}{(1+u)^2(1-qu)} \\
& = & [u^n] \frac{1}{(1+u)^2(1-qu)} - q [u^{n-2}] \frac{1}{(1+u)^2(1-qu)} \\
& = & \left( \sum_{j=0}^n (-1)^j (j+1) q^{n-j} \right) - q \left( \sum_{j=0}^{n-2} (-1)^j (j+1) q^{n-2-j} \right) \\
& = & (-1)^n (n+1) + \sum_{i=0}^{n-1} (-1)^i (2i+1) q^{n-i},
\end{eqnarray*} as needed.
\end{proof}

{\it Remark}: As noted in the proof of Theorem \ref{conjsym}, if $q$ is even then the number of regular semisimple conjugacy classes of $Sp(2n,q)$ is equal to the number of regular semisimple conjugacy classes of $GL(n,q)$. It would be nice to have a more direct proof of this.

\section{Orthogonal groups} \label{orth} This section contains our main new results. Subsection \ref{odd} enumerates regular semisimple classes in odd characteristic, and Subsection \ref{even} treats even characteristic.

\subsection{Odd characteristic} \label{odd}

In this subsection we suppose that $q$ is odd. Lemma \ref{oddchar} characterizes what it means for an element of $SO^{\pm}(n,q)$ to be regular semisimple.

\begin{lemma} \label{oddchar} An element of $SO^{\pm}(n,q)$ is regular semisimple if and only if:
\begin{itemize}
\item For all polynomials $\phi \neq z \pm 1$, the corresponding Jordan block partition has size at most 1
\item For $\phi = z + 1$, the Jordan block partition is either empty or consists of two blocks of size 1
\item For $\phi = z - 1$, the Jordan block partition is either empty, consists of one block of size 1, or consists of two blocks of size 1
\end{itemize}
\end{lemma}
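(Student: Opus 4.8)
The plan is to translate both requirements---semisimplicity and regularity---into constraints on the partitions $\lambda_\phi$ attached to the irreducible polynomials $\phi$, and then to read off regularity from a centralizer-dimension computation in the algebraic group. Since an element is semisimple exactly when it is diagonalizable over $\overline{\mathbb{F}}_q$, semisimplicity is equivalent to every $\lambda_\phi$ having all parts equal to $1$, i.e. $\lambda_\phi = (1^{r_\phi})$, where $r_\phi$ is the multiplicity of $\phi$ as an elementary divisor. Thus for a generic $\phi$ the phrase ``size at most $1$'' simultaneously forbids Jordan blocks of size $\geq 2$ (non-semisimple) and multiplicities $r_\phi \geq 2$; what remains is to pin down the admissible multiplicities for $\phi = z \pm 1$ versus the arbitrary $\phi$, which is precisely where regularity enters.

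Next I would compute the centralizer of a semisimple element $g$ in the algebraic group $\overline{G} = SO_n(\overline{\mathbb{F}}_q)$; since $SO_n$ has index $2$ in $O_n$, this centralizer has the same dimension as the centralizer in $O_n$. Decomposing $V = \overline{\mathbb{F}}_q^{\,n}$ into eigenspaces and using that the form pairs the $\lambda$-eigenspace with the $\lambda^{-1}$-eigenspace, one obtains
\[
C_{\overline G}(g) \;\cong\; O_a \times O_b \times \prod GL_{m},
\]
where $a = r_{z-1}$ and $b = r_{z+1}$ are the multiplicities of the eigenvalues $1$ and $-1$ (whose eigenspaces are nondegenerate), and where the $GL$ factors arise from the remaining reciprocal pairs $\{\lambda,\lambda^{-1}\}$, one factor $GL_{r_\phi}$ for each such pair coming from an irreducible $\phi \neq z \pm 1$. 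Hence
\[
\dim C_{\overline G}(g) = \binom{a}{2} + \binom{b}{2} + \sum_{\text{pairs}} r_\phi^{\,2},
\]
while $\operatorname{rank}(\overline G) = \lfloor n/2 \rfloor = \lfloor (a+b)/2 \rfloor + \sum_{\text{pairs}} r_\phi$, and $g$ is regular precisely when these two quantities agree.

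The crux is then an elementary analysis of
\[
\dim C_{\overline G}(g) - \operatorname{rank}(\overline G) = \Big(\binom{a}{2} + \binom{b}{2} - \big\lfloor \tfrac{a+b}{2}\big\rfloor\Big) + \sum_{\text{pairs}} r_\phi(r_\phi - 1),
\]
combined with the observation that $\det(g) = (-1)^{b}$, so that $g \in SO_n$ forces $b$ to be even. With $b$ even one checks that both summands are nonnegative, so regularity is equivalent to both vanishing: the second sum vanishes iff $r_\phi \leq 1$ for every $\phi \neq z \pm 1$ (the first bullet), and, using that $b$ is even, the first term equals $(\binom{a}{2} - \lfloor a/2\rfloor) + (\binom{b}{2} - b/2)$ and so vanishes iff $a \in \{0,1,2\}$ and $b \in \{0,2\}$ (the last two bullets).

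I expect the main obstacle to be the centralizer identification together with the careful bookkeeping of the determinant constraint, which is exactly what breaks the symmetry between $z+1$ and $z-1$: eigenvalue $-1$ with odd multiplicity gives $\det = -1$ and is therefore excluded from $SO_n$, forbidding a single block for $z+1$, whereas a single block for $z-1$ (eigenvalue $1$, $\det = +1$) remains admissible. This determinant accounting, rather than the closing arithmetic, is where care is needed.
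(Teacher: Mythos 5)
Your proof is correct and follows essentially the same route as the paper: both pass to the algebraic group, decompose the centralizer of a semisimple element according to eigenspaces (the $O_a \times O_b \times \prod GL$ factorization), and use the determinant to force the $-1$-eigenspace to be even dimensional. The only real difference is cosmetic: you test regularity by an explicit dimension count against the rank, whereas the paper argues that the connected centralizer must be a torus and hence cannot contain a $GL_d$ with $d \geq 2$ or an $SO_m$ with $m \geq 3$ --- equivalent criteria.
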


\begin{proof}   We work in the corresponding algebraic group $X:=SO(n,\overline{\mathbb{F}_q})$ where $\overline{\mathbb{F}_q}$ is the algebraic closure of the field of order $q$.
Let $x \in X$ be a semisimple regular element.
If $\alpha \ne \pm 1$, then the $\alpha$ eigenspace of $x$ is totally singular and the sum of the $\alpha$ and
$\alpha^{-1}$ eigenspaces is nondegenerate.  If the multiplicity of the $\alpha$-eigenspace is $d$, then we see
that there is a subgroup $GL(d,\overline{\mathbb{F}_q})$ centralizing $x$, whence $d \le 1$.

Let $W$ denote the fixed space of $x$.  Then $W$ is nondegenerate and so $SO(W)$ is contained in the centralizer of $x$.
Since this must be a torus, it follows that $\dim W \le 2$.   The same argument applies to the $-1$ eigenspace.
Note that the $-1$ eigenspace must be even dimensional.

Conversely, if the conditions hold, we see that the connected part of the centralizer of $x$ is a torus.
\end{proof}

We also note that if the partition corresponding to $z+1$ or $z-1$ is nonempty, it can be of either positive or negative type.

Lemma \ref{dataforO} characterizes what it means for $x,y$ regular semisimple elements of $SO^{\pm}(n,q)$ to be conjugate in $O^{\pm}(n,q)$.

\begin{lemma}  \label{dataforO}  Let $x, y \in SO^{\pm}(n,q)$ be regular semisimple elements.   Then $x$ and $y$
are conjugate in $O^{\pm}(n,q)$ if and only if they have the same characteristic polynomial and the
$\pm 1$ eigenspaces have the same type.
\end{lemma}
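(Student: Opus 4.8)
The plan is to analyze conjugacy of regular semisimple elements by decomposing the underlying space according to the eigenvalue structure, exactly as set up in Lemma \ref{oddchar}. A regular semisimple $x \in SO^{\pm}(n,q)$ determines, via its characteristic polynomial, an orthogonal decomposition of the space $V = \F_q^n$ into $x$-invariant nondegenerate subspaces: a nondegenerate piece carrying all the $\phi \ne z \pm 1$ factors (where the conditions force multiplicity one, so each irreducible $\phi$ together with its $*$-dual contributes a nondegenerate block on which $x$ acts without a nontrivial connected centralizer), the fixed space $W$ (the $z-1$ eigenspace), and the $-1$ eigenspace $U$. First I would observe that the characteristic polynomial alone determines the dimensions and the isometry type of the ``generic'' piece: on the part away from $\pm 1$, the form type is forced, so two elements with the same characteristic polynomial have conjugate (in the full orthogonal group of that summand) restrictions there. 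Then the only remaining freedom is the isometry type of the two small spaces $W$ and $U$, which by Lemma \ref{oddchar} have dimension at most $2$ (with $U$ even-dimensional), and on which $x$ acts as $\pm 1$, i.e.\ purely as a scalar.

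The key steps, in order, are as follows. First, I would show that conjugacy in $O^{\pm}(n,q)$ respects this eigenspace decomposition: any isometry conjugating $x$ to $y$ must carry each eigenspace of $x$ to the corresponding eigenspace of $y$, hence must restrict to an isometry between the matching summands, and conversely a compatible family of such isometries glues to a global isometry conjugating $x$ to $y$. Second, on the generic summand I would invoke that same-characteristic-polynomial forces conjugacy: the element acts as $\zeta$ on each $\alpha$-block, the block carrying $\phi$ and its dual is a hyperbolic-type space of determined dimension, and Witt's theorem (or the standard classification of such semisimple elements in orthogonal groups) gives the isometry. Third, on $W$ and $U$, where $x$ acts as the scalar $\pm 1$, any linear bijection is automatically an $x$-intertwiner, so an isometry exists between the summands of $x$ and $y$ if and only if those summands have the same dimension \emph{and} the same type; this is precisely the extra invariant in the statement. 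Collecting these, $x$ and $y$ are $O^{\pm}(n,q)$-conjugate iff they agree on the generic part (guaranteed by equal characteristic polynomial) and the $\pm 1$ eigenspaces match in dimension (again read off from the characteristic polynomial) and type.

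For the forward direction I would argue: if $g \in O^{\pm}(n,q)$ satisfies $gxg^{-1} = y$, then $g$ intertwines the eigenspace decompositions, so the $\alpha$-eigenspace of $x$ maps isometrically to that of $y$; in particular the $\pm 1$ eigenspaces, being nondegenerate $x$- and $y$-invariant spaces of the same dimension, are carried isometrically onto one another, forcing equal type, and the characteristic polynomials visibly agree. For the converse I would build $g$ summand by summand from Witt's extension theorem: equal characteristic polynomials and equal $\pm 1$-eigenspace types give isometries on each invariant orthogonal summand intertwining the two actions, and their orthogonal direct sum is the desired $g$. The main obstacle I anticipate is the careful bookkeeping on the two small eigenspaces in odd characteristic: one must confirm that ``same dimension'' is automatic from the characteristic polynomial while ``same type'' genuinely is an independent invariant (a one- or two-dimensional nondegenerate space over $\F_q$ can be of either of two isometry types), and that on these spaces the $x$-action imposes no further constraint because $x$ is scalar there, so type is the only remaining discriminant. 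I would also verify that the decomposition is genuinely orthogonal and nondegenerate on each piece, which is where Lemma \ref{oddchar} and the pairing of dual eigenvalues are used, so that Witt's theorem applies cleanly.
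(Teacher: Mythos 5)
Your proposal is correct and follows essentially the same route as the paper: decompose $V$ orthogonally according to the irreducible (self-conjugate or conjugate-pair) factors of the characteristic polynomial, note that the type of each summand away from $z\pm 1$ is forced while the $\pm 1$ eigenspaces carry an independent type invariant, and glue isometries via Witt's theorem. The paper's proof is just a terser version of the same argument.
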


\begin{proof}   Let $V$ denote the natural module for $O^{\pm}(n,q)$.  Let $f$ be the characteristic polynomial of $x$.
Write $f = (z-1)^a(z+1)^bg_3g_2 \ldots g_m$ where $g_i$ is either irreducible and self conjugate or $g_i$ is a product
of two irreducible polynomials which are conjugate to one another.   Let  $V_j$ denote the kernel of $g_j(x)$.  Then
each $V_j$ is nondegenerate.  If $g_j$ is irreducible,  $V_j$ is even dimensional of $-$ type and if $g_j$
is a product of two irreducible polynomials, then $V_j$ is even dimensional of $+$ type.   The result now follows
by Witt's theorem.
\end{proof}

Lemma \ref{oddsplit} describes which classes of $SO^{\pm}(n,q)$ are obtained by splitting a class of $O^{\pm}(n,q)$.

\begin{lemma} \label{oddsplit}
A conjugacy class of regular semisimple elements of $SO^{\pm}(n,q)$ is obtained by splitting a class of $O^{\pm}(n,q)$ if and only if its characteristic polynomial has no $z \pm 1$ factors.
\end{lemma}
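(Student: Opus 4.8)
The plan is to combine the standard index-two splitting criterion with an explicit computation of the orthogonal centralizer of a regular semisimple element. Recall that if $H$ has index $2$ in a finite group $G$ and $x \in H$, then the $G$-conjugacy class of $x$ breaks into two $H$-classes precisely when $C_G(x) \subseteq H$, and otherwise stays a single $H$-class. Here I take $G = O^{\pm}(n,q)$ and $H = SO^{\pm}(n,q)$, which is the kernel of the determinant map $\det \colon O^{\pm}(n,q) \to \{\pm 1\}$; this is where $q$ odd enters, ensuring $SO$ has index $2$ in $O$. Since a regular semisimple $x$ lies in $SO^{\pm}(n,q)$ by hypothesis, the criterion says the $SO^{\pm}(n,q)$-class of $x$ arises by splitting an $O^{\pm}(n,q)$-class if and only if $C_{O^{\pm}(n,q)}(x) \subseteq SO^{\pm}(n,q)$, that is, if and only if every element of $C_{O^{\pm}(n,q)}(x)$ has determinant $1$.

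First I would reduce to a factorwise statement. Writing $V = \bigoplus_i V_i$ for the primary decomposition of the natural module into the mutually orthogonal, nondegenerate pieces attached to the distinct $*$-orbits of irreducible factors of the characteristic polynomial (exactly as in the proof of Lemma \ref{dataforO}), any isometry commuting with $x$ preserves each $x$-invariant piece $V_i$, so $C_{O(V)}(x) = \prod_i C_{O(V_i)}(x|_{V_i})$. Since the determinant on $C_{O(V)}(x)$ is the product of the determinants on the factors, $C_{O(V)}(x) \subseteq SO(V)$ if and only if each factor $C_{O(V_i)}(x|_{V_i})$ lies in $SO(V_i)$.

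Next I would evaluate the factors using the regular semisimple structure of Lemma \ref{oddchar}. For a factor attached to $\phi \neq z \pm 1$, the element $x|_{V_i}$ has distinct eigenvalues, so anything commuting with it is diagonal in the eigenbasis; pairing each eigenvalue $\alpha$ with $\alpha^{-1}$ under the form shows such a diagonal isometry has determinant $1$ (equivalently, this centralizer is a connected torus of $U(1)$- or $GL(1)$-type, hence contained in the connected group $SO(V_i)$). In particular no swap of the $\alpha$- and $\alpha^{-1}$-eigenspaces (or of $V_\phi$ and $V_{\phi^*}$) can commute with $x$ unless $\alpha = \pm 1$, so no hidden determinant $-1$ element can appear here. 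The remaining factors are the $\pm 1$ eigenspaces, on which $x$ acts as a scalar, so the factor is the full orthogonal group of that eigenspace. By Lemma \ref{oddchar} the $(-1)$-eigenspace has dimension $0$ or $2$ and the $(+1)$-eigenspace has dimension $0$, $1$, or $2$; in every nonzero case the orthogonal group of that eigenspace, namely $O^{\pm}(2,q)$ or $O(1,q) = \{\pm 1\}$, contains a reflection of determinant $-1$ commuting with the scalar $x|_{V_i}$.

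Putting these together, $C_{O^{\pm}(n,q)}(x) \subseteq SO^{\pm}(n,q)$ if and only if both the $(+1)$- and $(-1)$-eigenspaces are trivial, i.e.\ if and only if the characteristic polynomial of $x$ has no $z \pm 1$ factor, which is the claim. I expect the main point to get right to be the determinant bookkeeping in both directions: one must verify that the torus factors at eigenvalues $\neq \pm 1$ contribute only determinant $1$ (so that no determinant $-1$ element can lurk among them), and conversely that every nonzero $\pm 1$ eigenspace genuinely produces a commuting reflection. The connectedness of the non-$\pm 1$ centralizer tori handles the first, and the explicit presence of reflections in $O^{\pm}(2,q)$ and in $O(1,q)$ handles the second.
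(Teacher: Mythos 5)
Your argument is correct and follows essentially the same route as the paper: both rest on the criterion that the class splits precisely when $C_{O^{\pm}(n,q)}(x) \subseteq SO^{\pm}(n,q)$, on the pairing of eigenvalues $\alpha,\alpha^{-1}$ forcing determinant $1$ on the part of the centralizer away from the $\pm 1$ eigenspaces, and on the presence of a commuting reflection whenever a $\pm 1$ eigenspace is nonzero. The only difference is organizational: rather than computing the centralizer factor by factor through the primary decomposition, the paper takes a hypothetical determinant $-1$ centralizing element, passes to its $2$-part, and notes that its $-1$ eigenspace is an odd-dimensional nondegenerate $x$-invariant subspace, which forces $x$ to have a $\pm 1$ eigenvalue.
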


\begin{proof}      Let $x \in C$ be an $O^{\pm}(n,q)$ class of semisimple regular elements contained in $SO^{\pm}(n,q)$.
Note that $C$ splits into two classes in  $SO^{\pm}(n,q)$ if and only if the centralizer of $x$ is contained in  $SO^{\pm}(n,q)$.
Suppose that $x$ commutes with an element $y \in O^{\pm}(n,q)$ with $\det(y)=-1$.  We may assume that $y$ is a $2$-element
and so in particular $y$ is semisimple.  It follows that the $-1$ eigenspace of $y$ is odd dimensional and nondegenerate.
Any odd dimensional nondegenerate $x$-subspace must have an eigenvalue $\pm 1$ (because the other eigenvalues
are paired on any nondegenerate invariant subspace).

Conversely, if $x$ has eigenvalue either $\pm 1$, it obviously has a nondegenerate $1$-dimensional space, whence it commutes
with a reflection.
\end{proof}

Let $N^*(q;d), M^*(q;d)$ be as in Section \ref{sym}. The following complement to Lemma \ref{lem3} will be useful. It is part c of Lemma 1.3.17 of \cite{FNP}.

\begin{lemma} \label{lem4} Suppose that the characteristic is odd. Then
\[ \prod_{d \geq 1} (1-u^d)^{-N^*(q;2d)} (1+u^{d})^{-M^*(q;d)} = \frac{(1-u)(1+u)^2}{1-qu^2}.\]
\end{lemma}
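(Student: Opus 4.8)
\emph{Strategy.} The plan is to deduce Lemma~\ref{lem4} from Lemma~\ref{lem3}, which in odd characteristic ($e=2$) reads
\[ \prod_{d\ge1}(1+u^d)^{-N^*(q;2d)}(1+u^d)^{-M^*(q;d)}=\frac{(1+u)^2(1-qu)}{1-qu^2}. \]
Both left-hand sides carry the identical factor $\prod_{d\ge1}(1+u^d)^{-M^*(q;d)}$, so dividing the asserted identity by Lemma~\ref{lem3} cancels every $M^*$-factor and reduces everything to the single \emph{reduced identity}
\[ \prod_{d\ge1}\left(\frac{1+u^d}{1-u^d}\right)^{N^*(q;2d)}=\frac{1-u}{1-qu}, \]
since the right-hand sides satisfy $\dfrac{(1-u)(1+u)^2/(1-qu^2)}{(1+u)^2(1-qu)/(1-qu^2)}=\dfrac{1-u}{1-qu}$. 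Thus it suffices to prove the reduced identity, and multiplying it back by Lemma~\ref{lem3} recovers Lemma~\ref{lem4}.

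\emph{Reformulation over $\mathbb{F}_q[y]$.} To prove the reduced identity I would use the substitution $y=z+z^{-1}$, i.e.\ the degree-two correspondence governed by $z^2-yz+1$, whose discriminant is $y^2-4$. For a monic irreducible $\psi\in\mathbb{F}_q[y]$ of degree $d$ with $\psi\neq y\mp2$, let $\eta_n$ denote the quadratic character of $\mathbb{F}_{q^n}^\times$ (with $\eta_n(0)=0$) and set $\eta(\psi)=\eta_d(\beta^2-4)$ for a root $\beta$ of $\psi$. One checks that $\psi$ is inert (equivalently $\eta(\psi)=-1$, i.e.\ $z^2-\beta z+1$ is irreducible over $\mathbb{F}_{q^d}$) exactly when $z^{d}\psi(z+z^{-1})$ is an irreducible self-conjugate polynomial of degree $2d$, and that $\psi$ splits ($\eta(\psi)=+1$) exactly when $z^{d}\psi(z+z^{-1})=\phi\phi^*$ is a product of a $*$-conjugate pair of degree-$d$ irreducibles; the ramified cases $\psi=y\mp2$ correspond to $z=\pm1$. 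Hence the inert (resp.\ split) irreducibles of degree $d$ are counted by $N^*(q;2d)$ (resp.\ $M^*(q;d)$).

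\emph{Two Euler products.} Grouping $\prod_{\psi}(1-u^{\deg\psi})^{-1}=\sum_{f\ \text{monic}}u^{\deg f}=\frac{1}{1-qu}$ over the monic irreducibles of $\mathbb{F}_q[y]$ according to this trichotomy gives
\[ \frac{1}{1-qu}=(1-u)^{-2}\prod_{d\ge1}(1-u^d)^{-N^*(q;2d)}(1-u^d)^{-M^*(q;d)}, \]
while the twisted product $L(u,\eta)=\prod_{\psi\neq y\mp2}(1-\eta(\psi)u^{\deg\psi})^{-1}$ groups to
\[ L(u,\eta)=\prod_{d\ge1}(1+u^d)^{-N^*(q;2d)}(1-u^d)^{-M^*(q;d)}. \]
I would evaluate $L(u,\eta)$ through its logarithmic derivative $u\frac{d}{du}\log L(u,\eta)=\sum_{n\ge1}\left(\sum_{a\in\mathbb{F}_{q^n}}\eta_n(a^2-4)\right)u^n$, using compatibility of $\eta$ with norms; the standard character-sum evaluation $\sum_{a\in\mathbb{F}_{q^n}}\eta_n(a^2-4)=-1$ then gives $u\frac{d}{du}\log L(u,\eta)=-u/(1-u)$, whence $L(u,\eta)=1-u$. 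Dividing the first display by the second cancels the common $M^*$-factors and leaves precisely the reduced identity, completing the argument.

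\emph{Main obstacle.} The delicate point is the correspondence of the second paragraph: one must verify that a split $\psi$ always yields a genuine non-self-conjugate pair (so that $z^{d}\psi(z+z^{-1})$ is squarefree and its two degree-$d$ factors are interchanged by $*$ rather than individually self-conjugate), and that an inert $\psi$ yields an irreducible of exact degree $2d$. These facts, together with the evaluation $\sum_{a}\eta_n(a^2-4)=-1$, depend essentially on the odd-characteristic hypothesis, which guarantees $2\neq0$, the separability of $y^2-4$, and the distinctness of the ramification points $z=\pm1$.
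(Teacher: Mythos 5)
Your proof is correct, but note that the paper itself does not prove Lemma \ref{lem4}: exactly as with Lemma \ref{lem3}, it is simply quoted from part c of Lemma 1.3.17 of \cite{FNP}. Your argument therefore takes a genuinely different route, in that it supplies an actual proof, self-contained modulo Lemma \ref{lem3}. The reduction step is valid: dividing the claimed identity by Lemma \ref{lem3} (with $e=2$) cancels every $M^*$-factor and leaves
\[ \prod_{d\ge 1}\left(\frac{1+u^d}{1-u^d}\right)^{N^*(q;2d)}=\frac{1-u}{1-qu}. \]
Your dictionary under $y=z+z^{-1}$ is also sound: for $\psi\neq y\mp 2$ irreducible of degree $d$ with root $\beta$, the inert case gives an irreducible self-conjugate polynomial $z^d\psi(z+z^{-1})$ of degree $2d$ (its root $\alpha$ generates a degree-$2d$ extension of $\mathbb{F}_q$), and the split case gives a genuine pair $\{\phi,\phi^*\}$ with $\phi\neq\phi^*$: if $\alpha^{-1}=\alpha^{q^j}$ with $\alpha\neq\pm 1$ and $0<j<d$, then $d$ divides $2j$, forcing $j=d/2$ and hence $\beta=\alpha+\alpha^{q^{d/2}}\in\mathbb{F}_{q^{d/2}}$, contradicting $\deg\psi=d$; so the inert and split irreducibles of degree $d$ are counted exactly by $N^*(q;2d)$ and $M^*(q;d)$. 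The two Euler products, the norm compatibility $\eta_n|_{\mathbb{F}_{q^d}^{\times}}=\eta_d^{n/d}$ underlying the logarithmic derivative, and the classical evaluation $\sum_{a\in\mathbb{F}_{q^n}}\eta_n(a^2-4)=-1$ (valid because the discriminant $16$ is nonzero in odd characteristic) then give $L(u,\eta)=1-u$, and the identity follows by multiplying back. What your approach buys: it removes the dependence on \cite{FNP} for this lemma, and it is very much in the spirit of the Britnell identities (Lemmas \ref{brit} and \ref{brit2}) that the paper already uses, namely an Euler product twisted by a character and evaluated through a character sum. What it costs: Lemma \ref{lem3} remains an unproved citation, so the combined argument is still not fully self-contained, and the verifications you flag as the ``main obstacle'' do need to be written out, though as indicated above they are standard.
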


Let $rs_G$ denote the number of regular semisimple conjugacy classes of a group $G$. Define

\[ R_{SO^+}(u) = 1 + \sum_{n \geq 1} rs_{SO^+(2n,q)} u^n \]
\[ R_{SO^-}(u) = \sum_{n \geq 1} rs_{SO^-(2n,q)} u^n \]
\[ R_{SO}(u) = 1 + \sum_{n \geq 1} rs_{SO(2n+1,q)} u^n \]

The following lemma is crucial.

\begin{lemma} \label{ogenodd}  Suppose that the characteristic is odd.
\begin{enumerate}
\item \begin{eqnarray*}
& & R_{SO^+}(u^2) + R_{SO^-}(u^2) + 2u R_{SO}(u^2) \\
& = & \left[ (1+2u^2)(1+2u+2u^2) +1 \right] \frac{(1-qu^4)}{(1+u^2)^2(1-qu^2)} - 1.
\end{eqnarray*}

\item \[ R_{SO^+}(u^2) - R_{SO^-}(u^2) =  \frac{2(1-qu^4)}{(1+u^2)^2(1-u^2)} - 1 \]
\end{enumerate}
\end{lemma}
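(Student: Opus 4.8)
The plan is to count regular semisimple classes directly from the classification in Lemmas~\ref{oddchar}, \ref{dataforO} and~\ref{oddsplit}, separating the contribution of the $z\pm1$ factors from that of the remaining factors, and then to repackage the count through Lemmas~\ref{lem3} and~\ref{lem4}. First I would record the shape of an admissible characteristic polynomial under the standing assumption that $q$ is odd: by Lemma~\ref{oddchar} the multiplicity $a$ of $z-1$ lies in $\{0,1,2\}$, the multiplicity $b$ of $z+1$ lies in $\{0,2\}$, and the remaining (``rest'') factors are squarefree, grouping into self-conjugate irreducibles (each contributing a nondegenerate piece of $-$ type) and conjugate pairs (each of $+$ type), so the rest always has even dimension. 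Since $b$ is even, $a$ has the parity of $\dim V$; hence $a\in\{0,2\}$ for $SO^{\pm}(2n,q)$ while $a=1$ for $SO(2n+1,q)$.

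Next I would introduce two generating functions for the rest, tracking half the dimension in $u$. Let
\[ A(u)=\prod_{d\geq1}(1+u^d)^{N^*(q;2d)}(1+u^d)^{M^*(q;d)}, \qquad B(u)=\prod_{d\geq1}(1-u^d)^{N^*(q;2d)}(1+u^d)^{M^*(q;d)}. \]
By construction $A$ counts rest-configurations while $B$ counts them weighted by $(-1)^{\#\{\text{self-conjugate factors}\}}$, which is exactly the orthogonal type $\epsilon(\mathrm{rest})$ of the rest, by the type assignments in the proof of Lemma~\ref{dataforO} and the multiplicativity of type under orthogonal direct sum. Lemma~\ref{lem3} (with $e=2$) and Lemma~\ref{lem4} evaluate these to $A(u)=\frac{1-qu^2}{(1+u)^2(1-qu)}$ and $B(u)=\frac{1-qu^2}{(1-u)(1+u)^2}$. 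Writing $A_m,B_m$ for the coefficients, the number of rest-configurations of dimension $2m$ of $+$ type is $P_m=(A_m+B_m)/2$ and of $-$ type is $Q_m=(A_m-B_m)/2$.

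I would then count classes according to $(a,b)$. By Lemma~\ref{dataforO} the $O$-classes sharing a characteristic polynomial are indexed by the types of the present $\pm1$ eigenspaces, subject via Witt's theorem to assembling into the fixed ambient isometry type. When no $z\pm1$ factor occurs (the case $(0,0)$) this forces the rest type to equal the ambient type, so only $P_n$ (type $+$) or $Q_n$ (type $-$) polynomials arise, each a single $O$-class; by Lemma~\ref{oddsplit} these split, contributing $2P_n$ or $2Q_n$. Once a $z\pm1$ factor is present there is no splitting, and its eigenspace type absorbs any discrepancy, so every rest-configuration contributes, with $1$ admissible type-assignment if exactly one of $z\pm1$ occurs and $2$ if both do. As every such element has $\det x=(-1)^b=1$ and so lies in $SO$, summing the even cases $(0,0),(2,0),(0,2),(2,2)$ gives $rs_{SO^{\pm}(2n,q)}=2P_n\,(\text{resp. }2Q_n)+2A_{n-1}+2A_{n-2}$, while the odd case (where $a=1$ always supplies a $z-1$ factor, so nothing splits) gives $rs_{SO(2n+1,q)}=A_n+2A_{n-1}$. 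These translate into $R_{SO^+}(u)-R_{SO^-}(u)=2B(u)-1$, $R_{SO^+}(u)+R_{SO^-}(u)=(2+4u+4u^2)A(u)-1$, and $R_{SO}(u)=(1+2u)A(u)$; substituting $u\mapsto u^2$ yields part~(2) at once, and for part~(1) one adds $2uR_{SO}(u^2)=(2u+4u^3)A(u^2)$, recognizing $(2+4u^2+4u^4)+(2u+4u^3)=(1+2u^2)(1+2u+2u^2)+1$.

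The main obstacle I anticipate is the bookkeeping of orthogonal types, especially in the odd-dimensional case. There the fixed space $V_{z-1}$ is one-dimensional, so its ``type'' is a discriminant rather than a $\pm$ sign, and $O(2n+1,q)$ is unchanged by scaling the form; I must justify carefully that the number of surviving $O$-classes is $1$ when $b=0$ and $2$ when $b=2$. I would do this by fixing a form and counting isometry classes of orthogonal decompositions via Witt's theorem, using that in odd dimension the isometry class is pinned down by dimension and discriminant, so that a single multiplicative constraint links the free binary choices of $\mathrm{disc}(V_{z-1})$ and $\mathrm{type}(V_{z+1})$. I would also verify the boundary coefficients ($A_{-1}=0$, $A_0=B_0=1$) so the low-dimensional terms match, and confirm that no even-characteristic hypothesis slips in, the whole argument living under $q$ odd as required by Lemma~\ref{lem4}.
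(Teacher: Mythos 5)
Your proposal is correct and follows essentially the same route as the paper: you use Lemmas~\ref{oddchar}, \ref{dataforO}, and \ref{oddsplit} to reduce to counting characteristic polynomials with type data, handle the $z\pm1$ factors by an explicit polynomial prefactor, and evaluate the unsigned and type-signed counts of the remaining factors via Lemmas~\ref{lem3} and~\ref{lem4}. The only difference is presentational: you organize the count coefficient-by-coefficient over the cases $(a,b)$ and per ambient type (with the $P_m,Q_m$ refinement), whereas the paper writes down the combined generating-function identities for the sum and difference directly; the resulting identities agree.
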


\begin{proof} For the first part, the term $(1+2u^2)$ comes from the polynomial $z+1$; the $1$ is for the empty partition, and the $2u^2$ is for the partition with two blocks of size 1, either of $+$ or $-$ type. The term $(1+2u+2u^2)$ comes from the polynomial $z-1$; the $1$ is for the empty partition, the $2u$ is for the partition of size 1 (either of $+$ or $-$ type), and the $2u^2$ is for the partition consisting of two blocks of size 1 (either of $+$ or $-$ type). Further note by Lemma \ref{oddsplit} that one must count twice those conjugacy classes with no $z \pm 1$ term. We conclude from Lemma \ref{dataforO} that
\begin{eqnarray*}
& & R_{SO^+}(u^2) + R_{SO^-}(u^2) + 2u R_{SO}(u^2) \\
& = & (1+2u^2)(1+2u+2u^2) \prod_{d \geq 1} (1+u^{2d})^{N^*(q;2d)} (1+u^{2d})^{M^*(q;d)} \\
& & + \prod_{d \geq 1} (1+u^{2d})^{N^*(q;2d)} (1+u^{2d})^{M^*(q;d)} - 1 \\
& = & \left[ (1+2u^2)(1+2u+2u^2) +1 \right] \frac{(1-qu^4)}{(1+u^2)^2(1-qu^2)} - 1.
\end{eqnarray*} the final equality being Lemma \ref{lem3}.

For part 2 of Lemma \ref{ogenodd}, note that due to cancelation of positive and negative types, the polynomials $z \pm 1$ both contribute a factor of 1 to $R_{SO^+}(u^2) - R_{SO^-}(u^2)$. Again one must count twice those conjugacy classes with no $z \pm 1$ term. Observe that the space corresponding to a self-conjugate polynomial $\phi \neq z \pm 1$ has negative type, whereas the space corresponding to a conjugate pair of non-self conjugate polynomials has positive type. Thus by Lemmas \ref{dataforO} and \ref{lem4},
\begin{eqnarray*}
R_{SO^+}(u^2) - R_{SO^-}(u^2) & = & 2 \prod_{d \geq 1} (1-u^{2d})^{N^*(q;2d)} (1+u^{2d})^{M^*(q;d)} - 1 \\
& = & \frac{2(1-qu^4)}{(1+u^2)^2(1-u^2)} - 1.
\end{eqnarray*}
\end{proof}

One can now solve for $R_{SO^+}(u), R_{SO^-}(u), R_{SO}(u)$.

Indeed, taking odd degree terms in part 1 of Lemma \ref{ogenodd} gives that
\[ 2u R_{SO}(u^2) = (1+2u^2) 2u \frac{(1-qu^4)}{(1+u^2)^2(1-qu^2)}, \]
which simplifies to
\[ R_{SO}(u^2) = \frac{(1+2u^2)(1-qu^4)}{(1+u^2)^2(1-qu^2)}. \]

Similarly, taking even degree terms in part 1 of Lemma \ref{ogenodd} gives that
\[ R_{SO^+}(u^2) + R_{SO^-}(u^2) =  \frac{[(1+2u^2)^2+1] (1-qu^4)}{(1+u^2)^2(1-qu^2)}  - 1 \]
Part 2 of Lemma \ref{ogenodd} gives that
\[ R_{SO^+}(u^2) - R_{SO^-}(u^2) = \frac{2(1-qu^4)}{(1+u^2)^2 (1-u^2)} - 1.\]

Thus \[ R_{SO^+}(u^2) = \frac{(1+2u^2+2u^4)(1-qu^4)}{(1+u^2)^2(1-qu^2)} + \frac{(1-qu^4)}{(1+u^2)^2(1-u^2)} - 1 \] and
\[ R_{SO^-}(u^2) = \frac{(1+2u^2+2u^4)(1-qu^4)}{(1+u^2)^2(1-qu^2)} - \frac{(1-qu^4)}{(1+u^2)^2(1-u^2)} \]

We now obtain the main result of this section.

\begin{theorem} Assume that the characteristic is odd.
\begin{enumerate}
\item The number of regular semisimple classes of $SO(3,q)$ is $q$, and the number of regular semisimple classes of $SO(5,q)$ is $q^2-q-1$. For $n \geq 3$, the number of regular semisimple classes of $SO(2n+1,q)$ is
    \[ q^n-q^{n-1}-q^{n-2}+3q^{n-3}-5q^{n-4}+7q^{n-5} + \cdots + (-1)^n (2n-5) q - (-1)^n (n-1).\]

\item The number of regular semisimple classes of $SO^+(4,q)$ is $q^2-2q+3$. For $n \geq 4$ even, the number of regular semisimple conjugacy classes of $SO^+(2n,q)$ is
    \[ q^n-q^{n-1}+q^{n-2}-3q^{n-3}+5q^{n-4} - \cdots + (2n-7)q^2 - \left( \frac{5n-10}{2} \right) q + \frac{3n}{2}.\]

\item The number of regular semisimple classes of $SO^+(2,q)$ is $q-1$ and the number of regular semisimple classes of $SO^+(6,q)$ is $q^3-q^2+2q-4$. For $n \geq 5$ odd, the number of regular semisimple conjugacy classes of $SO^+(2n,q)$ is \[ q^n-q^{n-1}+q^{n-2}-3q^{n-3}+5q^{n-4} - \cdots - (2n-7)q^2 + \left( \frac{5n-11}{2} \right) q - \frac{3n-1}{2}.\]

\item The number of regular semisimple classes of $SO^-(4,q)$ is $q^2-1$. For $n \geq 4$ even, the number of regular semisimple conjugacy classes of $SO^-(2n,q)$ is
    \[ q^n-q^{n-1}+q^{n-2}-3q^{n-3}+5q^{n-4} - \cdots + (2n-7)q^2 - \left( \frac{3n-10}{2} \right) q + \frac{n-4}{2}.\]

\item The number of regular semisimple classes of $SO^-(2,q)$ is $q+1$ and the number of regular semisimple classes of $SO^-(6,q)$ is $q^3-q^2$. For $n \geq 5$ odd, the number of regular semisimple classes of $SO^-(2n,q)$ is
   \[ q^n-q^{n-1}+q^{n-2}-3q^{n-3}+5q^{n-4} - \cdots - (2n-7)q^2 + \left( \frac{3n-9}{2} \right) q - \frac{n-3}{2}.\]
\end{enumerate}
\end{theorem}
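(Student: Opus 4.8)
The plan is to read off each of the five counts as a single Taylor coefficient of the rational generating functions $R_{SO}$, $R_{SO^+}$, $R_{SO^-}$ that have already been solved for just above the theorem statement. Writing $w$ for the argument $u^2$, so that $rs_{SO(2n+1,q)} = [w^n] R_{SO}(w)$, $rs_{SO^+(2n,q)} = [w^n] R_{SO^+}(w)$, and $rs_{SO^-(2n,q)} = [w^n] R_{SO^-}(w)$, I observe that each generating function is built from just two basic kernels, multiplied by explicit polynomials in $w$ (and, for the two even-dimensional families, combined additively). Hence the entire computation rests on the two coefficient formulas
\[ a_n := [w^n]\frac{1}{(1+w)^2(1-qw)} = \sum_{j=0}^{n}(-1)^j(j+1)q^{n-j}, \qquad S_n := [w^n]\frac{1}{(1+w)^2(1-w)} = \sum_{j=0}^{n}(-1)^j(j+1). \]
The first is exactly the convolution already established in the proof of Theorem \ref{conjsym}, and the second is its specialization at $q=1$; one checks directly that $S_n = (n+2)/2$ for $n$ even and $S_n = -(n+1)/2$ for $n$ odd.

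First I would treat the odd-dimensional groups. Expanding the numerator gives $R_{SO}(w) = (1+2w-qw^2-2qw^3)\cdot\frac{1}{(1+w)^2(1-qw)}$, so with the convention $a_m=0$ for $m<0$,
\[ rs_{SO(2n+1,q)} = a_n + 2a_{n-1} - q\,a_{n-2} - 2q\,a_{n-3}. \]
Substituting the closed form for $a_n$ and collecting powers of $q$ telescopes this into the claimed alternating polynomial; the recurrence $a_m = q\,a_{m-1} + (-1)^m(m+1)$ (equivalent to multiplying the kernel by $(1-qw)$) is the convenient tool for the collapse. The cases $n=1,2$ come from the same expression, giving $q$ and $q^2-q-1$, and they sit outside the stated general pattern only because the displayed polynomial lists terms that coincide for small $n$.

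For the even-dimensional groups I would expand both kernels. Setting $P_n := a_n + 2a_{n-1} + (2-q)a_{n-2} - 2q\,a_{n-3} - 2q\,a_{n-4}$ for the contribution of $(1+2w+2w^2)(1-qw^2)/[(1+w)^2(1-qw)]$ and $Q_n := S_n - q\,S_{n-2}$ for the contribution of $(1-qw^2)/[(1+w)^2(1-w)]$, the derived formulas yield
\[ rs_{SO^+(2n,q)} = P_n + Q_n, \qquad rs_{SO^-(2n,q)} = P_n - Q_n \qquad (n \ge 1), \]
the additive constant $-1$ present in $R_{SO^+}(w)$ affecting only the $n=0$ term. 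Thus $SO^+$ and $SO^-$ share the high-order part $P_n$ and differ precisely by $2Q_n$, which is linear in $q$; this matches the fact that the two stated answers agree down through the $q^2$ coefficient and diverge only in the coefficients of $q$ and $q^0$. The claimed polynomials again follow by inserting the closed forms for $a_n$ and $S_n$, and the special values at $n=1,2,3$ are read off by direct substitution.

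The routine but genuinely error-prone part, and the main obstacle, is the final bookkeeping: collapsing the $q$-weighted convolutions into the displayed alternating polynomials, and especially pinning down the two lowest coefficients, namely the coefficient of $q$ and the constant term. These are where the $(1+w)^{-2}$ factor and, for the even-dimensional groups, the $S_n$ terms interfere, and where the parity of $n$ enters, both through the closed form of $S_n$ and through the $(-1)^m$ terms in $a_m$. Since these lowest terms are precisely what distinguish the five cases and force the separate treatment of small $n$, I would compute them by directly tracking the contributions of indices $j$ near $n$ in $a_n$ and $S_n$, rather than relying on the leading-order alternating pattern.
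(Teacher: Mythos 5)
Your proposal is correct and follows essentially the same route as the paper: both read each count off as a coefficient of the generating functions derived just before the theorem, reducing everything to the convolution $[w^n]\frac{1}{(1+w)^2(1-qw)}=\sum_{j=0}^n(-1)^j(j+1)q^{n-j}$ from the symplectic proof plus the elementary kernel $\frac{1}{(1+w)^2(1-w)}$. The only differences are organizational (you expand the numerator polynomials fully into $a_n$- and $S_n$-combinations, while the paper reuses its part 1 and Theorem \ref{conjsym} for the intermediate pieces and handles $SO^-$ via the difference identity), and your setup checks out against all the stated small cases.
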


\begin{proof} Let $[u^n] f(u)$ denote the coefficient of $u^n$ in a power series $f(u)$. For the first part of the theorem, the generating function for $R_{SO}(u^2)$ gives that number of regular semisimple classes of $SO(2n+1,q)$ is equal to
\[ [u^n] \frac{(1+2u)(1-qu^2)}{(1+u)^2(1-qu)} = [u^n] \frac{(1-qu^2)}{(1+u)^2(1-qu)} + 2 [u^{n-1}] \frac{(1-qu^2)}{(1+u)^2(1-qu)} .\]
By the proof of the symplectic group case (Theorem \ref{conjsym}),
\begin{eqnarray*}
& & [u^n] \frac{(1-qu^2)}{(1+u)^2(1-qu)} \\
& = & q^n-3q^{n-1}+5q^{n-2}-7q^{n-3} + \cdots + (-1)^{n-1}(2n-1)q + (-1)^n(n+1)
\end{eqnarray*} and
\begin{eqnarray*}
& & 2 [u^{n-1}] \frac{(1-qu^2)}{(1+u)^2(1-qu)} \\
& = & 2q^{n-1}-6q^{n-2}+10q^{n-3} + \cdots + 2(-1)^{n-2}(2n-3)q + (-1)^{n-1}2n.
\end{eqnarray*} Addition now proves the first part of the theorem.

For the second part of the theorem, we assume $n$ is even, and need to evaluate
\[ [u^n] \left( \frac{(1+2u+2u^2)(1-qu^2)}{(1+u)^2(1-qu)} + \frac{(1-qu^2)}{(1+u)^2(1-u)} \right).\]
We write this as a sum of three terms:
\[ [u^n] \frac{(1+2u)(1-qu^2)}{(1+u)^2(1-qu)} + 2 [u^{n-2}] \frac{(1-qu^2)}{(1+u)^2(1-qu)} + [u^n] \frac{(1-qu^2)}{(1+u)^2(1-u)}.\]
By part one of the theorem, the first term is equal to
\[ q^n-q^{n-1}-q^{n-2}+3q^{n-3}-5q^{n-4}+7q^{n-5} + \cdots + (2n-5) q - (n-1).\]
By the odd characteristic case of Theorem \ref{conjsym}, the second term is equal to
\[ 2 \left( q^{n-2}-3q^{n-3}+5q^{n-4} - 7q^{n-5} \cdots -(2n-5)q + (n-1) \right).\]
The third term is easily seen to equal $-\frac{n}{2} q + \left( \frac{n}{2}+1 \right)$. Adding these three terms
proves part 2 of the theorem.

For the third part of the theorem, we assume $n$ is odd, and need to evaluate
\[ [u^n] \left( \frac{(1+2u+2u^2)(1-qu^2)}{(1+u)^2(1-qu)} + \frac{(1-qu^2)}{(1+u)^2(1-u)} \right).\] Again we write this as a sum of
three terms:
\[ [u^n] \frac{(1+2u)(1-qu^2)}{(1+u)^2(1-qu)} + 2 [u^{n-2}] \frac{(1-qu^2)}{(1+u)^2(1-qu)} + [u^n] \frac{(1-qu^2)}{(1+u)^2(1-u)}.\]
By part one of the theorem, the first term is equal to
\[ q^n-q^{n-1}-q^{n-2}+3q^{n-3}-5q^{n-4}+7q^{n-5} + \cdots - (2n-5) q + (n-1).\]
By the odd characteristic case of Theorem \ref{conjsym}, the second term is equal to
\[ 2 \left( q^{n-2}-3q^{n-3}+5q^{n-4} - 7q^{n-5} \cdots + (2n-5)q - (n-1) \right).\] The third term is
easily seen to be $\frac{n-1}{2} q - \left( \frac{n+1}{2} \right)$. Adding the three terms proves part three of
the theorem.

To prove part 4 of the theorem, by part 2 of Lemma \ref{ogenodd}, it is enough to verify that for even $n$, the number of regular semisimple classes in $SO^+(2n,q)$ minus the number of regular semisimple classes in $SO^-(2n,q)$ is the coefficient of $u^n$ in \[ \frac{2(1-qu^2)}{(1+u)^2(1-u)}.\] For even $n$, this coefficient is easily proved to equal $-nq + (n+2)$, which indeed is the difference between the expressions in parts 2 and 4 of the theorem.

To prove part 5 of the theorem, by part 2 of Lemma \ref{ogenodd}, it is enough to verify that for odd $n$, the number of regular semisimple classes in $SO^+(2n,q)$ minus the number of regular semisimple classes in $SO^-(2n,q)$ is the coefficient of $u^n$ in \[ \frac{2(1-qu^2)}{(1+u)^2(1-u)}.\] For odd $n$, this coefficient is easily seen to equal $(n-1)q-(n+1)$, which is the difference between the expressions in parts 3 and 5 of the theorem.
\end{proof}

\subsection{Even characteristic} \label{even}

Next suppose that $q$ is even. We need only consider $SO^{\pm}(2n,q)$, as $SO(2n+1,q)$ is isomorphic to a $Sp(2n,q)$ and $g \in SO(2n+1)$ is regular semisimple if and only if the corresponding element of $Sp(2n,q)$ is regular semisimple.

Lemma \ref{evenchar} characterizes the regular semisimple elements of $SO^{\pm}(2n,q)$.
The proof is identical to the case of odd characteristic.

\begin{lemma} \label{evenchar} An element of $SO^{\pm}(2n,q)$ is regular semisimple if and only if:
\begin{itemize}
\item For all polynomials $\phi \neq z - 1$, the corresponding Jordan block partition has size at most 1
\item For $\phi = z - 1$, the Jordan block partition is either empty or consists of two blocks of size 1
\end{itemize}
\end{lemma}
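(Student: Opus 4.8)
The plan is to imitate the proof of Lemma \ref{oddchar} essentially verbatim, working in the ambient algebraic group $X := SO(2n, \overline{\F_q})$ and reading off the Jordan/eigenvalue constraints on a semisimple element $x$ from the requirement that its centralizer be a maximal torus (equivalently, that it have dimension equal to the rank $n$). The only structural difference from the odd-characteristic setting is that now $-1 = 1$ in $\F_q$, so the polynomials $z+1$ and $z-1$ coincide and there is a single special eigenvalue, namely $1$; moreover a nondegenerate quadratic space in characteristic $2$ is forced to be even-dimensional, and this is what collapses the three possibilities allowed for $z-1$ in odd characteristic down to two here.

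First I would dispose of the eigenvalues $\alpha \neq 1$. As in Lemma \ref{oddchar}, the $\alpha$-eigenspace of $x$ is totally singular, the sum of the $\alpha$- and $\alpha^{-1}$-eigenspaces is nondegenerate, and if that eigenspace has dimension $d$ then a copy of $GL(d, \overline{\F_q})$ centralizes $x$. Regularity forces the centralizer into a torus, so $d \leq 1$; this gives the first bullet.

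Next I would analyze the fixed space $W$ of $x$, i.e. the eigenspace for the eigenvalue $1$ associated to $\phi = z - 1$. Exactly as before, $W$ is nondegenerate and $SO(W)$ lies in the centralizer of $x$, so $SO(W)$ must be contained in a torus, giving $\dim W \leq 2$. The new input is that in even characteristic a nondegenerate quadratic space is even-dimensional, so $\dim W \in \{0, 2\}$; that is, the partition attached to $z - 1$ is empty or consists of two blocks of size $1$, which is the second bullet. This even-dimensionality is precisely why the single-block option that $z-1$ enjoyed in odd characteristic is unavailable here. For the converse I would check, again verbatim from the odd case, that when both bullets hold the connected component of the centralizer of $x$ is a torus, so $x$ is regular semisimple.

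The main obstacle lies entirely in the characteristic-$2$ orthogonal geometry: one must confirm that the decomposition into totally singular eigenspaces and nondegenerate sums of $\alpha$- and $\alpha^{-1}$-eigenspaces, the appearance of $GL(d, \overline{\F_q})$ in the centralizer, and the identification of the connected centralizer with a torus all survive the passage from the symmetric bilinear form to the quadratic form (where $SO$ must be understood via the Dickson invariant and the root datum taken in its even-characteristic form). Once these points are in place, the combinatorial bookkeeping is identical to that of Lemma \ref{oddchar}, which is exactly what justifies the assertion that the proof is the same as in odd characteristic.
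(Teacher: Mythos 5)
Your proposal is correct and follows the paper's approach exactly: the paper's proof of Lemma \ref{evenchar} is literally the statement that it is identical to the odd-characteristic argument of Lemma \ref{oddchar}, and you have carried that out, correctly identifying the one new input (the polar form on the fixed space is alternating and nondegenerate in characteristic $2$, forcing the $1$-eigenspace to be even-dimensional and eliminating the single-block option for $z-1$).
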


Lemma \ref{dataforO2} characterizes what it means for $x,y$ regular semisimple elements of $SO^{\pm}(n,q)$ to be conjugate in $O^{\pm}(n,q)$.

\begin{lemma}  \label{dataforO2}  Let $x, y \in SO^{\pm}(n,q)$ be regular semisimple elements.   Then $x$ and $y$
are conjugate in $O^{\pm}(n,q)$ if and only if they have the same characteristic polynomial.
\end{lemma}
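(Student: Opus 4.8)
The plan is to follow the structure of the proof of Lemma \ref{dataforO}, decomposing the natural module into $x$-invariant nondegenerate orthogonal summands indexed by the factorization of the characteristic polynomial and then invoking Witt's extension theorem. The one genuinely new phenomenon to handle is that in characteristic $2$ there is no eigenvalue $-1$ distinct from $1$, so only the polynomial $z-1$ plays a special role. The whole point of the lemma is that this single special factor forces the type of the fixed space, eliminating the extra type parameter that appeared in the odd-characteristic statement of Lemma \ref{dataforO}.

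First I would set up the decomposition. Let $V$ be the natural module for $O^{\pm}(n,q)$, equipped with its quadratic form, and assume $x$ and $y$ have the same characteristic polynomial $f$. I would write $f = (z-1)^a g_1 \cdots g_m$, where each $g_i$ is either an irreducible $*$-self-conjugate polynomial or a product of two distinct $*$-conjugate irreducibles, and where $a \in \{0,2\}$ by Lemma \ref{evenchar}. Setting $W = \ker(x-1)^a$ (the fixed space) and $V_i = \ker g_i(x)$, I would verify, exactly as in Lemma \ref{dataforO}, that $V = W \perp V_1 \perp \cdots \perp V_m$ is an orthogonal direct sum of nondegenerate $x$-invariant subspaces, with the type of each $V_i$ determined by $g_i$: minus type when $g_i$ is irreducible and plus type when $g_i$ is a conjugate pair.

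The key step is to pin down the type of $W$. Since the sign of a nondegenerate even-dimensional orthogonal space is multiplicative over orthogonal direct sums (so that $+ \perp + = +$, $+ \perp - = -$, and $- \perp - = +$), the ambient sign $\epsilon$ of $V$ satisfies $\epsilon = \mathrm{sign}(W) \prod_i \mathrm{sign}(V_i)$. Because $\epsilon$ is fixed by the choice of group and each $\mathrm{sign}(V_i)$ is determined by $f$, the sign of $W$ is forced. Hence $x$ and $y$, sharing the characteristic polynomial $f$, produce isometric summands of matching type in every slot $V_i$, and the fixed spaces $W$ match automatically by the forcing just described. An isometry built summand by summand therefore extends, by Witt's theorem, to an element of $O^{\pm}(n,q)$ conjugating $x$ to $y$. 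The converse is immediate, since conjugate elements share a characteristic polynomial.

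The main obstacle is precisely this type-forcing step: I must confirm that in characteristic $2$ the fixed space $W$ carries no free type parameter, in sharp contrast with Lemma \ref{dataforO}, where the two eigenspaces attached to $z-1$ and $z+1$ could exchange type while leaving the characteristic polynomial unchanged. The absence of a separate $z+1$ factor is exactly what collapses that freedom. A secondary technical point to be careful about is that Witt's extension theorem is being applied to the quadratic form rather than merely to the associated bilinear form, which is the correct and available version in even characteristic.
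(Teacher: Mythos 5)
Your argument is correct, but it takes a genuinely different route from the paper's. You stay over $\mathbb{F}_q$: decompose $V$ into nondegenerate primary components, note that the type of each summand attached to a factor $g_i \neq z-1$ is dictated by whether $g_i$ is irreducible self-conjugate (minus type) or a conjugate pair (plus type), and then use multiplicativity of type over orthogonal sums to force the type of the $2$-dimensional fixed space from the ambient sign and the characteristic polynomial, finishing with Witt's extension theorem for quadratic forms (correctly flagged as the version needed in characteristic $2$). The paper instead passes to the algebraic group: since $SO(2n)$ is simply connected in characteristic $2$, centralizers of semisimple elements are connected, so conjugacy in $SO^{\pm}(n,q)$ is equivalent to conjugacy in $SO(n,\overline{\mathbb{F}_q})$; there one writes $V=V_1\oplus V_2$ with the $V_i$ complementary totally isotropic $x$-invariant subspaces, moves $y$ onto the same pair, and works inside the Levi subgroup $GL(V_1)$, where the characteristic polynomial alone controls conjugacy. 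Your version is more elementary (no algebraic groups or Lang--Steinberg) and runs parallel to Lemma \ref{dataforO}; it also isolates the real content --- that the type of the fixed space is not a free invariant in characteristic $2$ --- which is exactly what Lemma \ref{ogeneven} later exploits. The paper's version is shorter, avoids all bookkeeping of types, and its connected-centralizer observation does double duty in comparing $O$- and $SO$-conjugacy. The one step you leave implicit (as does the paper in Lemma \ref{dataforO}) is that on each summand $V_i$ the restrictions of $x$ and $y$ are interchanged by an isometry, i.e., a semisimple isometry with irreducible or conjugate-pair characteristic polynomial of multiplicity one is unique up to isometry; this is standard and at the same level of detail as the paper's own appeal to Witt's theorem.
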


\begin{proof}   Let $\tau$ be an element in $O^{\pm}(n,q)$ outside $SO^{\pm}(n,q)$.
Let $\overline{\mathbb{F}_q}$ be the algebraic closure of the field of $q$ elements.  Since $SO(n,q)$ is simply connected,
the centralizers of semisimple elements are connected and so $x$ and $y$ are conjugate in
$SO^{\pm}(n,q)$ if and only if they are conjugate in $SO(n,\overline{\mathbb{F}_q})$ (and similarly for $x$ and $y^{\tau}$).

So suppose that $x$ and $y$ have the same characteristic polynomial.  Let $V$ be the natural module
for $SO(n,\overline{\mathbb{F}_q})$.  So $V = V_1 \oplus V_2$ where each $V_i$ is isotropic
and the characteristic polynomial of $x$ on $V_i$ is $f_i$ (and the $f_i$ are conjugate to one another).
We can similarly decompose $V=W_1 \oplus W_2$ for $y$ (with the same characteristic polynomials).
Since pairs of complementary totally isotropic subspaces are in a single $O(n,\overline{\mathbb{F}_q})$ orbit (there are two orbits
for $SO(n,\overline{\mathbb{F}_q})$), we may conjugate $y$ by an element of $O(n,\overline{\mathbb{F}_q})$ and
assume that  $W_i=V_i$.   The stabilizer of this pair of subspaces is
a subgroup $H \cong GL(V_1)$.  Then conjugating by an element of $H$ we may assume that $x$ and $y$
agree on $V_1$ and thus also on $V_2$.
\end{proof}

Lemma \ref{evensplit} describes which classes of $SO^{\pm}(n,q)$ are obtained by splitting a class of $O^{\pm}(n,q)$.

\begin{lemma} \label{evensplit}
A conjugacy class of regular semisimple elements of $SO^{\pm}(n,q)$ is obtained by splitting a class of $O^{\pm}(n,q)$ if and only
if its characteristic polynomial has no $z-1$ factor.
\end{lemma}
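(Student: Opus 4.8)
The plan is to imitate the proof of Lemma \ref{oddsplit}, using the Dickson invariant $D\colon O^{\pm}(n,q)\to \mathbb{Z}/2$ (whose kernel is $SO^{\pm}(n,q)$) in place of the determinant. As there, an $O^{\pm}(n,q)$-class $C$ of regular semisimple elements lying in $SO^{\pm}(n,q)$ splits into two $SO^{\pm}(n,q)$-classes precisely when $C_{O^{\pm}(n,q)}(x)\subseteq SO^{\pm}(n,q)$, i.e.\ precisely when no element of Dickson invariant $1$ commutes with $x$. The whole question is therefore to decide, from the characteristic polynomial of $x$, whether some element outside $SO^{\pm}(n,q)$ centralizes $x$.

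First I would set up the orthogonal decomposition to be used throughout. Since $x$ is semisimple of odd order, the natural module $V$ is an orthogonal direct sum $V=\bigoplus_\phi V_\phi$ of nondegenerate $x$-invariant subspaces indexed by the $*$-classes of irreducible factors of the characteristic polynomial (pairing the eigenvalue $\alpha$ with $\alpha^{-1}$): distinct pieces are orthogonal because $B(u,w)=B(xu,xw)=\alpha\beta\,B(u,w)$ on the $\alpha$- and $\beta$-eigenspaces forces $B$ to vanish unless $\alpha\beta=1$. Every element of $C_{O}(x)$ preserves each $V_\phi$, so $C_{O}(x)=\prod_\phi C_{O(V_\phi)}(x|_{V_\phi})$ and the Dickson invariant is additive along this decomposition. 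It thus suffices to analyze each factor separately.

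For a factor with $\phi\neq z-1$, all eigenvalues of $x|_{V_\phi}$ over $\overline{\mathbb{F}_q}$ differ from $1$ and are genuinely paired as $\{\alpha,\alpha^{-1}\}$ with $\alpha\neq\alpha^{-1}$; as the eigenspaces are one-dimensional with distinct eigenvalues, anything commuting with $x|_{V_\phi}$ is diagonal, and intersecting with the orthogonal group gives the connected diagonal torus defined by $d_\alpha d_{\alpha^{-1}}=1$. A connected subgroup lies in the identity component, so (passing to $\mathbb{F}_q$-points) this centralizer is contained in $SO(V_\phi,q)$ and contributes only Dickson invariant $0$. For the factor $\phi=z-1$, which is present exactly when the characteristic polynomial has a $z-1$ factor, Lemma \ref{evenchar} shows that $V_1=\ker(x-1)$ is two-dimensional and nondegenerate with $x$ acting as the identity, so $C_{O(V_1)}(x|_{V_1})=O(V_1)\cong O^{\pm}(2,q)$. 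Such a plane contains a nonsingular vector $v$ (all nonzero vectors are nonsingular in the minus type, and nonsingular vectors exist in the hyperbolic plane), and the associated orthogonal transvection $t_v$ has Dickson invariant $1$; moreover $t_v$ commutes with $x$, since $x$ is the identity on $V_1\ni v$ and preserves $V_1^\perp$, on which $t_v$ acts trivially.

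Putting these together with additivity yields the lemma: $C_O(x)\subseteq SO$ exactly when no factor contributes Dickson invariant $1$, that is, exactly when the $z-1$ factor is absent; and when it is present the transvection $t_v$ exhibits a centralizing element outside $SO$. I expect the main obstacle to be the loss of the odd-characteristic shortcut. In that case one could replace the offending centralizing element by a semisimple involution and read off an odd-dimensional nondegenerate $(-1)$-eigenspace, forcing an eigenvalue $\pm1$; but in characteristic $2$ every involution is unipotent, so that route is closed. Working componentwise with the Dickson invariant avoids it, at the price of having to check connectedness (hence $SO$-membership) of the centralizer on each nontrivial factor and to produce the transvection explicitly on the $z-1$ factor.
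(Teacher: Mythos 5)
Your proof is correct and follows essentially the same route as the paper: the class fails to split exactly when $x$ commutes with a transvection, which happens precisely when the $1$-eigenspace (necessarily two-dimensional and nondegenerate) is nonzero, while in the absence of a $z-1$ factor the centralizer is a torus and hence lies in $SO^{\pm}(n,q)$. The only cosmetic difference is that the paper disposes of the converse more quickly by noting the centralizer in $GL(n,q)$ is a torus of odd order (so automatically inside the index-two subgroup), where you instead argue componentwise via connectedness and the Dickson invariant.
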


\begin{proof}   Let $x \in C \subset SO^{\pm}(n,q)$ with $C$ a semisimple conjugacy class of $O^{\pm}(n,q)$.
If the $1$-eigenspace is nontrivial, then it is even dimensional and nondegenerate.  Thus,
$x$ commutes with a transvection $y$.  Since $SO^{\pm}(n,q)$ contains no transvections, the
centralizer of $x$ is not contained in $SO^{\pm}(n,q)$, whence $C$ is a single conjugacy class
of $SO^{\pm}(n,q)$.

Conversely,  if the $1$-eigenspace of $x$ is trivial, then we see that the centralizer of $x$ in $GL(n,q)$
is a torus and in particular has odd order (and so is contained in $SO^{\pm}(n,q)$).
\end{proof}

The following lemma, part c of Lemma 1.3.17 of \cite{FNP}, will be useful.

\begin{lemma} \label{lem5} Suppose that the characteristic is even. Then
\[ \prod_{d \geq 1} (1-u^d)^{-N^*(q;2d)} (1+u^{d})^{-M^*(q;d)} = \frac{1+u}{1-qu^2}.\]
\end{lemma}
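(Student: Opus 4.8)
The plan is to deduce Lemma \ref{lem5} from the even-characteristic case of Lemma \ref{lem3} by a single division, thereby stripping away the conjugate-pair factors. In even characteristic Lemma \ref{lem3} (with $e=1$) reads
\[ \prod_{d \geq 1} (1+u^d)^{-N^*(q;2d)} (1+u^{d})^{-M^*(q;d)} = \frac{(1+u)(1-qu)}{1-qu^2}, \]
while the target Lemma \ref{lem5} differs only in the self-conjugate factor, $(1+u^d)^{-N^*(q;2d)}$ being replaced by $(1-u^d)^{-N^*(q;2d)}$. Dividing the asserted Lemma \ref{lem5} by Lemma \ref{lem3}, the identical $(1+u^d)^{-M^*(q;d)}$ factors cancel and the right-hand sides collapse to $\tfrac{1+u}{1-qu^2}\cdot\tfrac{1-qu^2}{(1+u)(1-qu)}=\tfrac{1}{1-qu}$. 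Hence, granting Lemma \ref{lem3}, the whole statement is equivalent to the single auxiliary identity
\[ \prod_{d \geq 1} \left( \frac{1+u^d}{1-u^d} \right)^{N^*(q;2d)} = \frac{1}{1-qu}, \]
which now involves only the self-reciprocal-irreducible statistic $N^*(q;2d)$.

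To prove this auxiliary identity I would pass to logarithms. Using $\log\frac{1+x}{1-x}=2\sum_{j\geq 0}\frac{x^{2j+1}}{2j+1}$ with $x=u^d$ on the left and $-\log(1-qu)=\sum_{m\geq 1}\frac{q^m}{m}u^m$ on the right, and matching coefficients of $u^m$, the identity reduces to the purely arithmetic statement
\[ \sum_{k \mid m,\ k \text{ odd}} \frac{2m}{k}\, N^*\!\left(q;\tfrac{2m}{k}\right) = q^m . \]
Writing $d=m/k$, each summand is $2d\,N^*(q;2d)$, so I would invoke the explicit (Carlitz) count of self-reciprocal monic irreducibles, namely $2d\,N^*(q;2d)=\sum_{\ell \mid d,\ \ell \text{ odd}}\mu(\ell)\,q^{d/\ell}$ for $d\geq 1$ (which correctly gives $N^*(q;2)=q/2$ in even characteristic). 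Substituting and setting $r=k\ell$, the coefficient of $q^{m/r}$ becomes $\sum_{\ell \mid r}\mu(\ell)=\delta_{r,1}$, so the double sum telescopes to $q^m$, as required.

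The reduction in the first paragraph is routine algebra, so the genuine obstacle is the isolation of $N^*(q;2d)$: every clean identity readily available here (Lemma \ref{lem1}, Lemma \ref{lem3}, and even the direct count of $*$-self-conjugate, i.e. palindromic, polynomials, which gives $\tfrac{1+u}{1-qu^2}$ but only through the mixed product $\prod_d(1-u^d)^{-N^*(q;2d)-M^*(q;d)}=\tfrac{1-u}{1-qu}$) entangles the self-conjugate statistic $N^*(q;2d)$ with the pair statistic $M^*(q;d)$. Breaking this entanglement requires genuine finite-field input, supplied above by Carlitz's formula; if one wished to keep the argument self-contained, that formula can itself be established by Möbius inversion via the substitution $\beta = t + t^{-1}$, which sends a self-reciprocal irreducible of degree $2d$ to an irreducible of degree $d$ over $\mathbb{F}_q$. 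I expect verifying that substitution-based count (equivalently, Carlitz's formula) to be the only nontrivial step; everything else is the formal generating-function manipulation already exhibited in the earlier lemmas.
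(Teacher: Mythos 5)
Your proof is correct, but there is no argument in the paper to measure it against: the paper does not prove Lemma \ref{lem5} at all, it simply quotes it as part c of Lemma 1.3.17 of \cite{FNP}, exactly as it quotes Lemma \ref{lem3} as part b. So where the paper cites, you derive, and the derivation holds up. Dividing the claimed identity by Lemma \ref{lem3} (with $e=1$) does cancel the $M^*$ factors and reduces the statement to
\[ \prod_{d \geq 1} \left(\frac{1+u^d}{1-u^d}\right)^{N^*(q;2d)} = \frac{1}{1-qu}, \]
taking logarithms turns this into the identity $\sum 2d\,N^*(q;2d) = q^m$, the sum running over $d=m/k$ with $k$ an odd divisor of $m$, and the even-characteristic Carlitz formula $2d\,N^*(q;2d)=\sum_{\ell} \mu(\ell)q^{d/\ell}$ (sum over odd divisors $\ell$ of $d$) makes the double sum telescope to $q^m$ as you claim; I verified the bookkeeping, including the boundary value $N^*(q;2)=q/2$. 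What your route buys is the observation that parts b and c of Lemma 1.3.17 of \cite{FNP} are not independent inputs: given either one, the other is equivalent to a classical enumeration of self-reciprocal irreducible polynomials.

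Two remarks. First, you can shorten the endgame considerably: the arithmetic identity you reduce to is exactly the pre-inversion form of Carlitz's count, and it has a direct proof needing no M\"obius function at all. Let $C$ be the cyclic subgroup of order $q^m+1$ (odd, since $q$ is even) of $\mathbb{F}_{q^{2m}}^*$. Any $\alpha \in C$ with $\alpha \neq 1$ satisfies $\alpha^{q^m}=\alpha^{-1}\neq \alpha$, so its degree over $\mathbb{F}_q$ is $2d$ with $d \mid m$ and $m/d$ odd, and its minimal polynomial is self-reciprocal irreducible; conversely every self-reciprocal irreducible of such a degree has all its roots in $C$, because $q^d+1$ divides $q^m+1$ when $m/d$ is odd. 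Hence $q^m=|C|-1=\sum 2d\,N^*(q;2d)$, summed over $d \mid m$ with $m/d$ odd, which is precisely your identity; invoking the inverted Carlitz formula and then re-summing merely undoes the inversion. Second, your closing sketch of a self-contained proof of Carlitz's formula via $\beta = t+t^{-1}$ is incomplete as stated: that substitution maps self-reciprocal irreducibles of degree $2d$ injectively, but not surjectively, to irreducibles of degree $d$ over $\mathbb{F}_q$ --- the image consists of those $\beta$ for which $t^2+\beta t + 1$ remains irreducible over $\mathbb{F}_{q^d}$, a trace condition in even characteristic --- so M\"obius inversion ``via the substitution'' does not by itself produce the count. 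This does not affect your main proof, which legitimately cites Carlitz's formula as known; and the group-theoretic argument above closes the gap if self-containment is wanted.
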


As in the odd characteristic section, we define generating functions:

\[ R_{SO^+}(u) = 1 + \sum_{n \geq 1} rs_{SO^+(2n,q)} u^n \]
\[ R_{SO^-}(u) = \sum_{n \geq 1} rs_{SO^-(2n,q)} u^n \]

Lemma \ref{ogeneven} solves for the generating functions $R_{SO^+}(u), R_{SO^-}(u)$.

\begin{lemma} \label{ogeneven} Suppose that the characteristic is even.
\begin{enumerate}
\item \[ R_{SO^+}(u^2) = \frac{1-qu^4}{1-qu^2} + \frac{1-qu^4}{1+u^2} - 1.\]

\item \[ R_{SO^-}(u^2) = \frac{1-qu^4}{1-qu^2} - \frac{1-qu^4}{1+u^2} . \]
\end{enumerate}
\end{lemma}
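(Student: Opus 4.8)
The plan is to mirror the odd-characteristic argument of Lemma \ref{ogenodd}: I will compute the sum $R_{SO^+}(u^2)+R_{SO^-}(u^2)$ and the difference $R_{SO^+}(u^2)-R_{SO^-}(u^2)$ separately, and then solve the resulting $2\times 2$ linear system. Two features of even characteristic make this cleaner than the odd case. First, by Lemma \ref{evenchar} the only special polynomial is $z-1$ (there is no separate $z+1$), and its Jordan data is either empty or two blocks of size $1$. Second, by Lemma \ref{dataforO2} two regular semisimple elements of a fixed $O^{\pm}(2n,q)$ are conjugate precisely when they share a characteristic polynomial, with no eigenspace-type data needed. Throughout I track the half-dimension, so that a self-conjugate irreducible of degree $2d$ and a conjugate pair of degree-$d$ irreducibles each contribute a factor $(1+u^{2d})$, matching Lemmas \ref{lem3} and \ref{lem5} after the substitution $u\mapsto u^2$.

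For the sum, I would factor each admissible characteristic polynomial as $(z-1)^a g$, where $a\in\{0,2\}$ and $g$ is the product of the remaining (self-conjugate irreducible or conjugate-pair) factors, of some half-dimension $m$ and orthogonal type $\delta_g$. When $a=0$, Lemma \ref{evensplit} says the $O^{\delta_g}$ class splits into two classes of $SO^{\delta_g}(2m,q)$; when $a=2$, the two-dimensional $1$-eigenspace may be of either $+$ or $-$ type, yielding one non-split class in $SO^{\delta_g}$ and one in $SO^{-\delta_g}$. Either way each choice of $g$ and $a$ contributes total weight $2$ to $R_{SO^+}(u^2)+R_{SO^-}(u^2)$, the $z-1$ part contributing the factor $1+u^2$. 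Hence the sum equals $2(1+u^2)\prod_{d}(1+u^{2d})^{N^*(q;2d)}(1+u^{2d})^{M^*(q;d)}$, corrected by $-1$ for the empty configuration; by Lemma \ref{lem3} with $e=1$ this is $\frac{2(1-qu^4)}{1-qu^2}-1$.

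For the difference the type must be signed. Recording that a self-conjugate irreducible factor contributes type $-$ and a conjugate pair contributes type $+$ replaces $(1+u^{2d})$ by $(1-u^{2d})$ in the self-conjugate factors, producing the signed product of Lemma \ref{lem5}. The $a=2$ configurations contribute one $SO^+$ and one $SO^-$ class and so cancel in the difference, while each $a=0$ configuration contributes $\pm 2$ according to $\delta_g$. Thus $R_{SO^+}(u^2)-R_{SO^-}(u^2)=2\prod_{d}(1-u^{2d})^{N^*(q;2d)}(1+u^{2d})^{M^*(q;d)}-1=\frac{2(1-qu^4)}{1+u^2}-1$ by Lemma \ref{lem5}. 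Adding and subtracting the two displayed expressions and dividing by $2$ then gives the claimed formulas for $R_{SO^+}(u^2)$ and $R_{SO^-}(u^2)$.

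The main obstacle is the bookkeeping around the $z-1$ factor and the splitting criterion, rather than any analytic difficulty. The crux is the observation that when $a=2$ the $1$-eigenspace can be hyperbolic or anisotropic, so a single characteristic polynomial yields one class in each of $SO^+$ and $SO^-$; this is exactly what makes the $z-1$ part cancel in the difference yet contribute $1+u^2$ in the sum. One must also reconcile Lemma \ref{evensplit} (doubling the classes with no $z-1$ factor) with the degenerate empty configuration, which accounts for the constant $-1$ appearing in both generating-function identities; getting this boundary term and the type signs consistent is the only place care is required.
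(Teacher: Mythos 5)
Your proposal is correct and follows essentially the same route as the paper: compute the sum and difference of the two generating functions using Lemmas \ref{evenchar}, \ref{dataforO2}, \ref{evensplit}, \ref{lem3}, and \ref{lem5}, then solve the linear system. Your sum $2(1+u^2)\prod(\cdots)-1$ is just the paper's $(1+2u^2)\prod(\cdots)+\prod(\cdots)-1$ regrouped, and your treatment of the boundary term and the type bookkeeping matches the paper's.
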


\begin{proof} It suffices to prove the two equalities:
\begin{itemize}
\item \[ R_{SO^+}(u^2) + R_{SO^-}(u^2) = \frac{2(1-qu^4)}{(1-qu^2)} - 1, \]
\item \[ R_{SO^+}(u^2) - R_{SO^-}(u^2) = \frac{2(1-qu^4)}{(1+u^2)} - 1.\]
\end{itemize}
To calculate $R_{SO^+}(u^2) + R_{SO^-}(u^2)$, there is a term $1+2u^2$ coming from the polynomial $z-1$, the $2u^2$ corresponding to the fact that the partition consisting of two blocks of size 1 can have either positive or negative type. Further note by Lemma \ref{evensplit} that we need to count twice those conjugacy classes with no $z - 1$ term. We conclude from Lemma \ref{dataforO2} that
\begin{eqnarray*}
R_{SO^+}(u^2) + R_{SO^-}(u^2) & = & (1+2u^2) \prod_{d \geq 1} (1+u^{2d})^{N^*(q;2d)} (1+u^{2d})^{M^*(q;d)} \\
& & + \prod_{d \geq 1} (1+u^{2d})^{N^*(q;2d)} (1+u^{2d})^{M^*(q;d)} - 1.
\end{eqnarray*} By Lemma \ref{lem3}, this is equal to
\begin{eqnarray*}
&  & \frac{(1+2u^2)(1-qu^4)}{(1+u^2)(1-qu^2)} + \frac{(1-qu^4)}{(1+u^2)(1-qu^2)} - 1\\
& = & \frac{2(1-qu^4)}{(1-qu^2)} - 1,
\end{eqnarray*} as claimed.

To calculate $R_{SO^+}(u^2) - R_{SO^-}(u^2)$, the polynomial $z-1$ contributes $1+u^2-u^2=1$. Further note by Lemma \ref{evensplit} that we need to count twice those conjugacy classes with no $z - 1$ term. Observe that the space corresponding to a self-conjugate polynomial $\phi \neq z-1$ has negative type, whereas the space corresponding to a conjugate pair of non-self conjugate polynomials has positive type. Hence by Lemma \ref{dataforO2},
\[ R_{SO^+}(u^2) - R_{SO^-}(u^2) = 2 \prod_{d \geq 1} (1-u^{2d})^{N^*(q;2d)} (1+u^{2d})^{M^*(q;d)} -1. \] By Lemma \ref{lem5}, this is equal to \[ \frac{2(1-qu^4)}{(1+u^2)} - 1, \] as claimed.
\end{proof}

Applying Lemma \ref{ogeneven} gives the main result of this section.

\begin{theorem} Assume that the characteristic is even. The number of regular semisimple classes of $SO^{\pm}(2,q)$ is $q \mp 1$, and for $n \geq 2$, the number of regular semisimple classes of $SO^{\pm}(2n,q)$ is \[ q^n-q^{n-1} \mp (-1)^n(q-1) .\]
\end{theorem}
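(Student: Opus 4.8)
The plan is to read the answer directly off the closed forms for the generating functions $R_{SO^{\pm}}(u^2)$ supplied by Lemma \ref{ogeneven}. By the definitions of $R_{SO^+}(u)$ and $R_{SO^-}(u)$, the number of regular semisimple classes of $SO^{\pm}(2n,q)$ equals the coefficient of $u^n$ in $R_{SO^{\pm}}(u)$, equivalently the coefficient of $u^{2n}$ in $R_{SO^{\pm}}(u^2)$. Since Lemma \ref{ogeneven} already writes $R_{SO^{\pm}}(u^2)$ as an explicit rational function, the entire proof reduces to a routine power-series coefficient extraction, and no further group theory is needed.

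Concretely, I would set $v = u^2$ and extract the coefficient of $v^n$ from each rational summand. For the first summand I would use $\frac{1-qv^2}{1-qv} = (1-qv^2)\sum_{k \geq 0} q^k v^k$, so that for $n \geq 2$ its $v^n$-coefficient is $q^n - q\cdot q^{n-2} = q^n - q^{n-1}$. For the second summand I would use $\frac{1-qv^2}{1+v} = (1-qv^2)\sum_{k \geq 0} (-1)^k v^k$, whose $v^n$-coefficient for $n \geq 2$ is $(-1)^n - q(-1)^{n-2} = (-1)^n(1-q)$. The trailing $-1$ (present for $SO^+$, absent for $SO^-$) contributes nothing to $v^n$ for $n \geq 1$. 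Combining with the signs in parts (1) and (2) of Lemma \ref{ogeneven} then gives, for $n \geq 2$,
\[ rs_{SO^{\pm}(2n,q)} = (q^n - q^{n-1}) \mp (-1)^n(q-1), \]
which is precisely the claimed formula.

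The one point requiring separate attention, and the only place the uniform computation breaks down, is the boundary case $n = 1$: there the $qv^2$ term in each numerator has not yet begun to contribute, so the extraction above is invalid. Instead $[v^1]\frac{1-qv^2}{1-qv} = q$ and $[v^1]\frac{1-qv^2}{1+v} = -1$, which yields $rs_{SO^+(2,q)} = q - 1$ and $rs_{SO^-(2,q)} = q + 1$, that is, $q \mp 1$. I would therefore record the $n=1$ value separately (as the theorem does) and restrict the uniform formula to $n \geq 2$. There is no genuine obstacle in this argument: the substantive content is carried entirely by Lemma \ref{ogeneven}, and what remains is bookkeeping, the only subtlety being the $n=1$ boundary term.
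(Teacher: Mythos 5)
Your proposal is correct and follows exactly the paper's own argument: both read the answer off the closed forms in Lemma \ref{ogeneven} by routine coefficient extraction, splitting each numerator $1-qu^4$ into two geometric-series contributions. Your explicit treatment of the $n=1$ boundary case is a welcome bit of extra care that the paper leaves implicit.
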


\begin{proof} Consider the case of $SO^+(2n,q)$. Letting $[u^n] f(u)$ denote the coefficient of $u^n$ in a power series $f(u)$, Lemma \ref{ogeneven} gives that the number of regular semisimple classes of $SO^+(2n,q)$ is
\begin{eqnarray*}
& & [u^n] \left( \frac{1-qu^2}{1-qu} + \frac{1-qu^2}{1+u} \right) \\
& = & [u^n] \frac{1}{1-qu} + [u^{n-2}] \frac{-q}{1-qu} + [u^n] \frac{1}{1+u} + [u^{n-2}] \frac{-q}{1+u}\\
& = & q^n-q^{n-1} - (-1)^n(q-1).
\end{eqnarray*} The argument for $SO^-(2n,q)$ is nearly identical.
\end{proof}

\section{Acknowledgements} Fulman was supported by NSF grant DMS 0802082 and Guralnick was supported by NSF grant DMS 1001962.
Fulman and Guralnick also thank the Simons Foundation for its support.  We thank Frank L\"{u}beck for helpful correspondence.

\end{document}